\documentclass[11pt]{article}
\usepackage{graphicx}
\usepackage{amsmath, amssymb, amsthm, mathtools}
\usepackage{authblk}
\usepackage{hyperref, enumerate,enumitem}
\usepackage{nicematrix}
\newtheorem{theorem}{Theorem}[section]
\newtheorem{lemma}[theorem]{Lemma}

\newtheorem{proposition}[theorem]{Proposition}
\newtheorem{corollary}[theorem]{Corollary}
\theoremstyle{definition}
\newtheorem{definition}[theorem]{Definition}
\newtheorem{remark}[theorem]{Remark}

\newtheorem{example}[theorem]{Example}
\usepackage{mathrsfs}
\usepackage{tikz}
\usetikzlibrary{intersections}
\newcommand{\C}{\mathscr C}
\newcommand{\mcM}{\mathcal M}
\newcommand{\F}{\mathbb F}
\newcommand{\h}{\textnormal{H}}
\title{\textbf{Intersecting Codes and the Connectivity of $q$-Matroids}}
\newcommand{\mA}{\mathcal A}
\DeclareMathOperator{\rk}{rk}
\newcommand{\mcL}{\mathcal L}
\usepackage{palatino}
\usepackage[margin=3cm]{geometry}
\author{Fabrizio Conca, Benjamin Jany, Alberto Ravagnani}
\affil{Eindhoven University of Technology}
\usepackage{cite}

\begin{document}

\maketitle

\begin{abstract}
We investigate the structure of intersecting error-correcting codes, with a 
particular focus on their connection to matroid theory. We establish properties 
and bounds for intersecting codes with the Hamming metric and illustrate how these distinguish the 
subfamily of minimal codes within the family of intersecting codes. We prove 
that the property of a code being intersecting is characterized by the 
matroid-theoretic notion of vertical connectivity, showing that intersecting 
codes are precisely those achieving the highest possible value of this parameter.
We then introduce the concept of vertical connectivity for $q$-matroids
and link it to the theory of  intersecting 
codes endowed with the rank metric.
\end{abstract}

\medskip

\section{Introduction}

Intersecting codes are linear codes in which the Hamming supports of any two non-zero codewords have a non-empty intersection. They are a classical object in coding theory, first introduced in \cite{KatonaSrivastava1983}, with the binary case having been thoroughly investigated. Indeed, over the binary field the class of intersecting codes coincides with the class of minimal codes, which are closely related to secret sharing schemes and frameproof codes \cite{blackburn}. Intersecting codes beyond the binary case, and their links to finite geometry, have garnered interest more recently; see, for example, \cite{borello2024geometryintersectingcodesapplications}.

The contributions of this paper are threefold. In the first part, we establish structural properties and bounds for intersecting codes using methods ranging from coding theory and density arguments to extremal combinatorics. We also compare intersecting codes with the proper subclass of minimal codes, highlighting the differences in their typical behavior with respect to the properties under consideration.

In the second part, we provide a matroid-theoretic characterization of the property of being an intersecting code. We show that the relevant parameter in this context is the vertical connectivity of the matroid associated with the code, and that intersecting codes are precisely those that attain the maximum possible value for this parameter.

The third part of the paper is devoted to identifying and establishing the $q$-analogues of these results. We propose a definition of vertical connectivity for $q$-matroids, which captures the property of a rank-metric code being intersecting in the sense of~\cite{borello2024geometryintersectingcodesapplications}. A main result in this context is that a rank-metric code is intersecting if and only if the vertical connectivity of the associated $q$-matroid equals its dimension (the largest possible value).

The remainder of this paper is organized as follows. In Section~\ref{sec:2} we introduce intersecting codes and minimal codes, and survey their main properties.
In Section~\ref{sec:3} we establish properties and bounds for intersecting codes and compare them with minimal codes. Section~\ref{sec:4} is devoted to the link between matroid connectivity and intersecting codes. Finally, in Section~\ref{sec:5} we introduce the notion of vertical connectivity for $q$-matroids and link it with the class of intersecting rank-metric codes.

\section{Intersecting codes} \label{sec:2}

In this section, we establish the terminology and the notation for the rest of the paper. We also recall some known results on intersecting codes that we will need repeatedly. 

Throughout the paper, $\F_q$ denotes the finite field of order $q$.
The 
\textbf{Hamming support} of $x \in \F_q^n$ is $\sigma^{\h}(x) := \{i \in [n] \, : \, x_i \neq 0\}$ and its \textbf{Hamming weight} is $w^\text H (x) := |\sigma(x)|$.
The \textbf{Hamming distance}
between $x, y \in \F_q^n$ is defined by $d^{\h}(x,y) := w^{\h}(x-y)$. A \textbf{code} is a linear subspace $\C \le \F_q^n$, endowed with the Hamming metric, and its \textbf{minimum distance} is $d :=\min \{w^{\h}(x) \mid x \in \C \setminus \{0\}\}$, where the zero code $\{0\}$ has minimum distance $n+1$. The elements of $\C$ are called \textbf{codewords}.
A  $[n,k,d]_q$ code is a code $\C \le  \F_q^n$, with dimension $k$ and minimum distance $d$. Finally the \textbf{Hamming support} of a code $\C$ is $\sigma^{\h}(\C) := \bigcup_{x \in \C} \sigma^{\h}(x)$.

In practice, it is often desirable to construct codes of high dimension and with a large minimum distance. However, there is a tradeoff between the two parameters, captured for instance by the celebrated Singleton bound:

\begin{equation}\label{eq:Singletonbd}
    d \leq n-k+1
\end{equation}
for any $[n,k,d]_q$ code.
Codes for which the Singleton bound is tight are called \textit{Maximum Distance Separable} (MDS) codes.

In this article we are interested in intersecting codes, which are codes whose codewords' supports have an additional combinatorial property. 

\begin{definition}
    A code $\C \leq \F_q^n$ is \textbf{$t$-intersecting} if for all $v, w \in \C \setminus\{0\}$ we have 
    $$|\sigma^{\h}(v) \cap \sigma^{\h}(w)| \geq t.$$
    We refer to a $1$-intersecting code as an \textbf{intersecting} code.
\end{definition}
Note that if a code $\C$ is $t$-intersecting, then it is also $\ell$-intersecting for $1\leq \ell \leq t$. 
We now show that for some parameter choices, MDS codes form a class of intersecting codes. 

We remark that in the literature there exists a different notion of intersecting code~\cite{COHNEN200375}, where a code is said to be \textbf{$t$-wise intersecting} if any $t$ linearly independent codewords have intersecting supports. What we call an intersecting code is  a $2$-wise intersecting code according to~\cite{COHNEN200375}.

 \begin{example}\label{MDSint}
 Let $\C \leq \F_q^n$ be an $[n,k,d]_q$ MDS code. 
    It is well known that if $\C$ is MDS of minimum distance $d$, then for all $S \subseteq [n]$ such that $|S| = d$, there exist $x \in \C$ such that $\sigma^{\h}(x) = S$. 
    Therefore, one can conclude that $\C$ is intersecting if and only if $d > n/2$, or equivalently  $n\geq 2k-1$ (using the fact that $d = n-k+1$). More precisely, such a code is $(n-2d)$-intersecting, as this is the minimum size of the intersection of two supports of size~$d$.
\end{example}

Another interesting example of intersecting codes are \textit{minimal codes}, which have been intensively studied in their own right; see~\cite{ashbarg,HENG2018176, alfarano2020combinatorialperspectivesminimalcodes, alfarano2019geometric,bartoli2019inductive,
bartoli2020cutting,
bonini2020minimal,
MR3163591,
kiermaier2019minimum,
lu2019parameters,
Massey,
tang2019full} among many others.  

\begin{definition}
  Let $\C \le \F_q^n$. An element $v \in \C$ is a \textbf{minimal codeword} if for all $w \in \C$ such that $\sigma^{\h}(w) \subseteq \sigma^{\h} (v)$ we have $w = \lambda v$ for some $\lambda \in \F_q$. The code $\C$ is a \textbf{minimal code} if all of its codewords are minimal. 
\end{definition}

Note that if a code $\C$ is minimal, then for any linearly independent $x, y \in \C$ we have $\sigma^{\h}(x) \cap \sigma^{\h}(y) \neq \emptyset$, otherwise we would have $\sigma^{\h}(x) \subsetneq \sigma^{\h}(x+y)$, contradicting minimality. Hence minimal codes are intersecting. It is well known that intersecting codes and minimal codes are equivalent if and only if $q=2$. We will discuss in further details the differences between these two classes of codes in Section \ref{sec:3}.
 
We conclude this section by recalling some basic results about intersecting codes and known bounds on their parameters. 
As a first observation, codes with large enough minimum distance are intersecting. 
\begin{lemma}
    Let $\C$ be an $[n, k, d]_q$ code. If $2d > n$, then $\C$ is intersecting.
\end{lemma}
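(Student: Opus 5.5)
The plan is a direct pigeonhole argument on supports, using only the definition of minimum distance. Fix two nonzero codewords $v, w \in \C$; we must show $\sigma^{\h}(v) \cap \sigma^{\h}(w) \neq \emptyset$. Both supports are subsets of $[n]$, and since $v,w \neq 0$ the definition of $d$ gives $|\sigma^{\h}(v)| = w^{\h}(v) \geq d$ and likewise $|\sigma^{\h}(w)| \geq d$.

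By inclusion--exclusion,
\begin{equation*}
|\sigma^{\h}(v) \cap \sigma^{\h}(w)| = |\sigma^{\h}(v)| + |\sigma^{\h}(w)| - |\sigma^{\h}(v) \cup \sigma^{\h}(w)| \geq 2d - n,
\end{equation*}
because the union is contained in $[n]$ and so has size at most $n$. The hypothesis $2d > n$ makes the right-hand side at least $1$, so the supports meet and $\C$ is intersecting. The same computation gives slightly more: $\C$ is in fact $(2d-n)$-intersecting.

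There is no real obstacle; the only point needing a remark is the degenerate case. If $\C = \{0\}$, then $d = n+1$ by the paper's convention, and the statement holds vacuously since there are no nonzero codewords. The case $v = w$, or $v$ a scalar multiple of $w$, needs no separate treatment, since the bound above covers it as well.
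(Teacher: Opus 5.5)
Your argument is correct, and it is the reasoning the paper has in mind. The paper states the lemma without proof as an immediate observation, and the same count (two supports of size at least $d$ inside $[n]$ share at least $2d-n$ coordinates) underlies Example~\ref{MDSint}. Your refinement that $\C$ is $(2d-n)$-intersecting is also right; Example~\ref{MDSint} writes $(n-2d)$, which is a sign slip, and your $2d-n$ is the correct quantity.
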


 In \cite{borello2024geometryintersectingcodesapplications}, the authors define  $i(k,q)$ to be the size of the smallest non-$2$-cohyperplanar set in $PG(k -1, q)$, which in coding terms is the the length of the shortest linear intersecting code  of dimension $k$ over $\F_q$. They then prove a Plotkin-like bound for intersecting codes.

\begin{theorem}[see~\text{\cite[Theorem~3.4]{borello2024geometryintersectingcodesapplications}}]
    For $1\leq t \leq k$,
    \[i(k,q) \geq k+\frac{q^t-1}{q^t-q^{t-1}}(k-t).\]
    Asymptotically, one has 
    \[ \liminf_{k\to\infty} \frac{i(k,q)}{k} \geq 2 + \frac 1 {q-1}.\]
\end{theorem}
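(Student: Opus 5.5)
The plan is a Plotkin-type averaging argument applied to a suitably shortened subcode. The basic input is that every nonzero codeword of an intersecting code has weight at least $k$.

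\textbf{Step 1 (weights are at least $k$).} Let $\C$ be an intersecting $[n,k]_q$ code and let $x\in\C\setminus\{0\}$. Any $y\in\C$ vanishing on $\sigma^{\h}(x)$ satisfies $\sigma^{\h}(y)\cap\sigma^{\h}(x)=\emptyset$. By the intersecting property this forces $y=0$. Hence the restriction map $\C\to\F_q^{\sigma^{\h}(x)}$ is injective, and so $w^{\h}(x)\ge k$.

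\textbf{Step 2 (shortening).} Fix $1\le t\le k$ and choose any set $S\subseteq[n]$ with $|S|=k-t$. The codewords of $\C$ vanishing on $S$ are cut out by $k-t$ linear conditions, so they form a subcode of dimension at least $t$. Pick a $t$-dimensional subcode $D$ among them. Then $\sigma^{\h}(D)\subseteq[n]\setminus S$, which gives
\[
|\sigma^{\h}(D)|\le n-k+t .
\]

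\textbf{Step 3 (Plotkin averaging on $D$).} For each coordinate $i\in\sigma^{\h}(D)$, the $i$-th coordinate functional is nonzero on $D$. Its kernel is a hyperplane of $D$, so exactly $q^t-q^{t-1}$ codewords of $D$ are nonzero at $i$. Double counting the pairs (codeword, coordinate in its support) gives
\[
\sum_{x\in D\setminus\{0\}} w^{\h}(x)=(q^t-q^{t-1})\,|\sigma^{\h}(D)| .
\]
By Step 1 the left-hand side is at least $(q^t-1)k$. Therefore
\[
|\sigma^{\h}(D)|\ge \frac{q^t-1}{q^t-q^{t-1}}\,k .
\]

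\textbf{Step 4 (combining).} Write $c_t=\frac{q^t-1}{q^t-q^{t-1}}$. Steps 2 and 3 together give
\[
n\ge k-t+c_t k .
\]
Since $c_t\ge 1$, we have $k-t+c_tk\ge k+c_t(k-t)$, because this is equivalent to $c_t t\ge t$. This proves the stated bound for every intersecting code of dimension $k$, and hence for $i(k,q)$.

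\textbf{Step 5 (asymptotics).} Our inequality gives $i(k,q)/k\ge 1+c_t-t/k$ for every fixed $t$. Letting $k\to\infty$ yields $\liminf_k i(k,q)/k\ge 1+c_t$. As $t\to\infty$ we have
\[
c_t\to \frac{q}{q-1}=1+\frac{1}{q-1},
\]
which gives the limit $2+\frac{1}{q-1}$.

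The main obstacle is choosing what to average over. A naive Plotkin argument on all of $\C$, or on an arbitrary $t$-dimensional subcode, only yields $n\ge c_t k$. The extra additive term comes from Step 2: shortening on $k-t$ coordinates places the averaged subcode inside a set of size $n-k+t$. I expect this choice to carry the proof, and the remaining steps are routine.
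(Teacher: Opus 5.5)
Your proof is correct. The paper itself contains no proof of this statement: it is quoted from \cite[Theorem~3.4]{borello2024geometryintersectingcodesapplications}, where it is formulated for non-$2$-cohyperplanar point sets in $PG(k-1,q)$. Your argument is therefore an independent, self-contained coding-theoretic derivation. Its only structural input is $d\ge k$ (your Step~1 is exactly the proof of Proposition~\ref{dgeqk}), followed by a Plotkin count on a shortened subcode. Geometrically, your subcode $D$ is the pencil of hyperplanes through the span of the $k-t$ points indexed by $S$, and Step~3 counts how many of these hyperplanes miss each point outside that span, so the two settings translate directly into each other. Your route in fact buys a slightly stronger inequality than the one stated: you prove $n\ge k-t+c_tk$, which exceeds $k+c_t(k-t)$ by $(c_t-1)t=t\sum_{j=1}^{t-1}q^{-j}$. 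The stated form is what one obtains if $S$ is taken inside an information set $I$ and the averaging is done only over $[n]\setminus I$, where each nonzero word of $D$ is guaranteed weight only $k-t$. Averaging over all of $\sigma^{\h}(D)$, where every nonzero word has weight at least $k$, avoids that loss. For example, for $q=2$, $k=3$, $t=2$ your bound gives $n\ge 6$, which is the true value of $i(3,2)$, whereas the displayed bound gives at most $5$ for any $t$. The asymptotic statement is unaffected, since the gain is $O(t)$. Your Step~5 handles the limits correctly, letting $k\to\infty$ for fixed $t$ before sending $t\to\infty$.
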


\section{Intersecting versus minimal codes}\label{sec:3}

Since minimal codes are a subclass of intersecting codes (and coincide when $q=2$), it is natural to ask how different these two classes are.
In this section, we compare in detail some of the differences between intersecting codes and minimal codes. More precisely, we show that minimal codes require a significantly more rigid combinatorial structure than the larger class of intersecting codes. 
We prove various results on the structure of intersecting codes, including a density result and bounds on their parameters. 

We start by establishing bounds on the parameters of intersecting codes. 
 The first one we show can be found as Theorem 2.17 in \cite{borello2024geometryintersectingcodesapplications} using finite geometry, but here we present it purely in coding theory terms. 
\begin{proposition}\label{dgeqk}
    Let $\C \le \F_q^n$ be a linear intersecting code with minimum distance $d$ and dimension $k$.
    Then $d\geq k$. In particular, $n\geq 2k-1$.
\end{proposition}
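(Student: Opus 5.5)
Take a codeword $x \in \C$ of minimum weight $d$, and write $S := \sigma^{\h}(x)$, so that $|S| = d$. Let $T := [n]\setminus S$, of size $n-d$. The idea is to show that the projection of $\C$ onto the coordinates in $T$ loses at most one dimension; this forces $n-d \geq k-1$. Separately, a puncturing argument on $S$ will give $d \geq k$.

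First we show $d \geq k$. Suppose for contradiction that $d < k$, that is, $d \leq k-1$. Consider the subcode
\[
\mathcal{D} := \{ y \in \C \, : \, y_i = 0 \text{ for all } i \in S \},
\]
that is, the kernel of the projection $\pi_S : \C \to \F_q^S$. The image of $\pi_S$ has dimension at most $|S| = d$. Hence
\[
\dim \mathcal{D} \geq k - d \geq 1,
\]
so there is a nonzero $y \in \C$ with $\sigma^{\h}(y) \cap S = \emptyset$. But then $\sigma^{\h}(x) \cap \sigma^{\h}(y) = \emptyset$ with both $x, y$ nonzero, contradicting that $\C$ is intersecting. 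Therefore $d \geq k$. In fact this argument gives the slightly stronger statement that $\pi_S$ is injective, because every nonzero codeword must meet $S$. Since a linear map is injective only if its domain has dimension at most that of its codomain, $k \leq d$.

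For the bound $n \geq 2k-1$, we use $T$. The kernel of the projection $\pi_T : \C \to \F_q^T$ consists of codewords supported inside $S$. We claim this kernel is exactly $\langle x \rangle$. The intersecting property alone does not immediately give this, so instead we argue via minimum distance. Take $y \neq 0$ in the kernel, and suppose $y \notin \langle x\rangle$. Choose $i \in S$ and $\lambda \in \F_q$ with $y_i = \lambda x_i$, where $\lambda \neq 0$ is possible since $y_i$ may be chosen nonzero on some coordinate of $S$. Then $y - \lambda x$ is a nonzero codeword, supported in $S \setminus \{i\}$, so it has weight less than $d$, a contradiction. Thus $\ker \pi_T = \langle x\rangle$ has dimension $1$. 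It follows that $k - 1 \leq |T| = n-d$, so $n \geq d + k - 1 \geq 2k-1$, using $d \geq k$ from the first step. This inequality is really just the Singleton bound~\eqref{eq:Singletonbd} combined with $d\geq k$. Citing~\eqref{eq:Singletonbd} directly ($k \leq n-d+1$) therefore suffices and avoids the kernel computation altogether; I would present it that way.

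The main (and essentially only) step is the observation that an intersecting code must project injectively onto the support of any nonzero codeword, in particular a minimum weight one. Everything else is dimension counting plus the Singleton bound.
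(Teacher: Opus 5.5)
Your proof is correct and is essentially the paper's argument: the paper also projects $\C$ onto the support of a minimum-weight codeword, uses the intersecting property to see that this projection is injective (so $k \le d$), and then combines this with the Singleton bound to obtain $n \ge 2k-1$. Your additional computation that $\ker \pi_T = \langle x \rangle$ is valid, but as you note it only reproves the Singleton bound and can be dropped.
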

\begin{proof}
    Let $x\in \C$ be a codeword of minimum Hamming weight $d$ and consider the projection $\pi_{\sigma^{\h}(x)}: \C \to \pi_{\sigma^{\h}(x)}(\C)$ onto its support. Observe that, since $\C$ is intersecting, its kernel $\ker(\pi_{\sigma^{\h}(x)})=\{y \in \C \mid \sigma^{\h}(y)\subseteq [n] \setminus \sigma(x)\}$ is trivial. Therefore,
    \[k=\dim(\pi_{\sigma(x)}(\C)) \leq |\sigma(x)|=d.\]
    Combining this with the Singleton bound yields
    $k\leq d \leq n-k+1$, hence $n\geq 2k-1$.
\end{proof} 

As an immediate corollary of Proposition \ref{dgeqk} we get the following bound for intersecting codes involving the field size. 

\begin{corollary}\label{cor:griesmertypebound}
     Let $\C \le \F_q^n$ be an intersecting code with minimum distance $d$ and dimension~$k$. Then 
     \[n \geq \sum_{j=0}^{k-1} \left \lceil \frac k {q^j} \right \rceil .\]
\end{corollary}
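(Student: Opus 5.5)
This follows by combining Proposition~\ref{dgeqk} with the Griesmer bound. For any $[n,k,d]_q$ code we have
\[
n \geq \sum_{j=0}^{k-1} \left\lceil \frac{d}{q^j} \right\rceil .
\]
This is the classical Griesmer bound, which we take as standard. If a self-contained argument is wanted, it comes from the usual residual-code induction. Take a codeword $x$ of weight $d$ and puncture $\C$ on $\sigma^{\h}(x)$. The result is an $[n-d,\,k-1,\,d']_q$ code with $d' \geq \lceil d/q \rceil$. Iterating this gives the sum above.

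The key step is monotonicity. The map $d \mapsto \sum_{j=0}^{k-1}\lceil d/q^j\rceil$ is non-decreasing in $d$, since each summand $\lceil d/q^j\rceil$ is. Proposition~\ref{dgeqk} gives $d \geq k$ for an intersecting code, so we may replace $d$ by $k$ term by term:
\[
n \;\geq\; \sum_{j=0}^{k-1} \left\lceil \frac{d}{q^j}\right\rceil \;\geq\; \sum_{j=0}^{k-1} \left\lceil \frac{k}{q^j}\right\rceil .
\]

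There is no real obstacle once the Griesmer bound is available. The only point to check is that the hypotheses match: the Griesmer bound holds for every linear code of dimension $k \geq 1$, and the case $k=0$ is vacuous since the sum is empty. As a sanity check, the $j=0$ term gives $k$ and the $j\ge1$ terms are each at least $1$, so the bound recovers at least $n \geq 2k-1$, consistent with Proposition~\ref{dgeqk}.
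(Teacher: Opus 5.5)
Your proposal is correct and follows the paper's proof: apply the Griesmer bound and use $d \geq k$ from Proposition~\ref{dgeqk}. You also make explicit that each term $\lceil d/q^j\rceil$ is non-decreasing in $d$, a step the paper leaves implicit.
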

\begin{proof}
  It is sufficient to combine the Griesmer bound
  $n \geq \sum_{j=0}^{k-1} \left \lceil \frac{d}{q^j} \right \rceil$ 
  with Proposition~\ref{dgeqk}.
\end{proof}

For minimal codes, the bound of Proposition \ref{dgeqk} can be made sharper taking into account the field size. 

\begin{theorem}[see~\text{\cite[Theorem~4.3]{alfarano2019geometric}}]\label{thm:bdmincode}
    Let $\C \leq \F_q^n$ be a minimal code with minimal distance $d$ and dimension $k$. Then 
  \[d \geq k+q-2. \]
\end{theorem}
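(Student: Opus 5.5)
The plan is to work entirely on the support of a minimum-weight codeword, as in the proof of Proposition~\ref{dgeqk}, and to extract the extra $q-2$ from the minimality of the codewords $y-\lambda x$. Throughout I assume $k\geq 2$. For $k=1$ every code is minimal while $d$ may equal $1$, so the statement must implicitly exclude this case.

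\emph{Step 1.} Fix $x\in\C$ of weight $d$ and put $S:=\sigma^{\h}(x)$. Since minimal codes are intersecting, the argument of Proposition~\ref{dgeqk} shows that the projection $\pi_S:\C\to\F_q^S$ is injective. I then rescale coordinatewise by $x$: for $y\in\C$ set $r_y:=(y_j/x_j)_{j\in S}\in\F_q^S$. The map $y\mapsto r_y$ is linear and injective, so $V:=\{r_y : y\in\C\}$ is a $k$-dimensional subspace of $\F_q^S$. It contains the all-ones vector $r_x$, and $r_y$ is constant exactly when $\pi_S(y)\in\langle \pi_S(x)\rangle$, that is, by injectivity, when $y\in\langle x\rangle$.

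\emph{Step 2 (key lemma).} I claim that for every $y\in\C\setminus\langle x\rangle$ and every $\lambda\in\F_q$ there is some $j\in S$ with $y_j=\lambda x_j$. In other words, the entries of $r_y$ cover all of $\F_q$. To see this, note that $w:=y-\lambda x$ is nonzero and is not a multiple of $x$. Since $w$ is a minimal codeword, we cannot have $\sigma^{\h}(x)\subseteq\sigma^{\h}(w)$. Hence some $j\in S$ satisfies $w_j=0$, which is the claim. This is where minimality, as opposed to mere intersection, is used, and it is the step I regard as the heart of the argument.

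\emph{Step 3 (counting).} Choose any $T\subseteq S$ with $|T|=k-1$; this is possible because $d\geq k$ by Proposition~\ref{dgeqk}. The subspace $\{v\in V : v_j=0 \text{ for all } j\in T\}$ is cut out by $k-1$ linear conditions inside the $k$-dimensional space $V$, so it contains a nonzero $v=r_y$. Because $T\neq\emptyset$ (as $k\geq 2$) and $v\neq 0$, the vector $v$ is not constant, so $y\notin\langle x\rangle$. By Step 2, $v$ attains each of the $q-1$ nonzero values of $\F_q$. Each such value must occur at a coordinate of $S\setminus T$, since $v$ vanishes on $T$. Therefore $|S\setminus T|\geq q-1$, which gives $d=|S|\geq (k-1)+(q-1)=k+q-2$.

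The only delicate points are checking that $v$ is indeed nonconstant, which is handled above by $v$ vanishing on a nonempty $T$, and the degenerate case $k=1$. A first attempt with $|T|=k-2$ only gives $k+q-3$, so taking $|T|=k-1$ is essential.
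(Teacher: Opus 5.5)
Your proof is correct. The restriction to $k\ge 2$ is also right: for $k=1$ every code is minimal, and $\C=\F_q^1$ has $d=1<q-1$ once $q\ge 3$. So that hypothesis is a needed correction to the statement as quoted, not a gap in your argument.

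The paper gives no argument of its own; it only cites \cite[Theorem~4.3]{alfarano2019geometric}. That source works with projective systems and cutting blocking sets, and your proof is the coordinate form of that picture. Let $\mathcal M$ be the columns of a generator matrix, viewed as points of $\mathrm{PG}(k-1,q)$, and let $H_x$ be the hyperplane corresponding to $x$. The codewords $y-\lambda x$ with $y\notin\langle x\rangle$ correspond to all hyperplanes other than $H_x$. So Step~2 says exactly that the $d$ points of $\mathcal M$ off $H_x$ meet every hyperplane of the affine space $\mathrm{PG}(k-1,q)\setminus H_x$. Step~3 is then the elementary size bound for such an affine blocking set: take a hyperplane through $k-1$ of its points, and note that the other $q-1$ hyperplanes in that parallel class each need a further point.

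Your version is self-contained and extends the proof of Proposition~\ref{dgeqk} directly, with minimality used only in Step~2. The geometric version has a different payoff: Step~3 can be replaced by the Jamison--Brouwer--Schrijver bound for affine blocking sets. That gives $d\ge (k-1)(q-1)+1$, which exceeds $k+q-2$ by $(k-2)(q-2)$.
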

The examples of intersecting codes we have shown for now were all coming from the class of MDS codes. However, not all intersecting codes are MDS.
\begin{example}
    Let $\C$ be the ternary code generated by 
    \[G =\begin{pmatrix}
        1 & 0 & 0 & 1 & 0 & 2 \\
        0 & 1 & 0 & 2 & 2 & 1 \\
        0 & 0 & 1 & 1 & 1 & 1
    \end{pmatrix}.\]
    It can be shown that $\C$ is an intersecting code with parameters $[6,3,3]$. In particular $\C$ meets the bound of proposition \ref{dgeqk} with equality. This code is an example of an intersecting code that is neither minimal nor MDS.
\end{example}

Natural questions are to determine if there exist intersecting codes for which the bounds in Proposition \ref{dgeqk} and Corollary \ref{cor:griesmertypebound} are tight, as well if there exist intersecting codes for which the bound of Theorem~\ref{thm:bdmincode} does not hold. The next example answers both questions.
We furthermore observe that in the binary case,  Theorem \ref{thm:bdmincode} and  Proposition~\ref{dgeqk} coincide, reflecting the fact that intersecting binary codes are precisely the minimal codes. 

\begin{example}\label{nkk}
    If $n\leq 2k-2$, the Singleton bound precludes the existence of an $[n, k, k]$ code, regardless of field size.
    Then suppose  that $n\geq 2k-1$.
    For any $q$ large enough there exists a $[2k-1, k, k]_q$ MDS code $\C$, for example a Reed-Solomon code, which by Example \ref{MDSint}, is intersecting. Those codes achieve both the bound $d = k$ as well as $ n = 2k-1$.  Furthermore if $d = k$, the bound of \ref{cor:griesmertypebound} is precisely the Griesmer bound and it is well known that MDS codes achieve the Griesmer bound with equality. Thus the bound \ref{cor:griesmertypebound} is also tight.
    Finally, because those MDS codes exist over any field of large enough size, this example also shows that intersecting code do not need to satisfy the bound of Theorem \ref{thm:bdmincode}. 
\end{example}

An interesting consequence of Theorem \ref{thm:bdmincode} is that for $q >  d-k+2$, minimal codes do not exist. Since minimal codes are intersecting codes, one may wonder what is the density of intersecting code of large alphabet size. We answer this question with the following result.

\begin{theorem}\label{thm:dens}
Suppose $k>1$ and $n\geq 2k-1$. Then intersecting codes are dense within $[n,k]_q$ codes as $q\to\infty$.
On the other hand, if $n < 2k-1$, then  intersecting codes are sparse within $[n,k]_q$ codes as $q\to\infty$.
\end{theorem}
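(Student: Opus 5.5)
The sparse part follows directly from Proposition~\ref{dgeqk}: every intersecting $[n,k]_q$ code satisfies $n\geq 2k-1$. So if $n<2k-1$ there are no intersecting $[n,k]_q$ codes at all, and their proportion is $0$ for every $q$. For the dense part, assume $k>1$ and $n\ge 2k-1$, and count non-intersecting codes among all $k$-dimensional subspaces of $\F_q^n$. The total number is the Gaussian binomial $\binom{n}{k}_q$, which is asymptotic to $q^{k(n-k)}$. I will bound the number of non-intersecting $k$-dimensional codes by $O(q^{k(n-k)-1})$.

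A code $\C$ fails to be intersecting exactly when it contains two nonzero codewords $v,w$ with disjoint supports. Disjoint supports force $v$ and $w$ to be linearly independent. Disjointness is also preserved under scalar multiplication and under enlarging the ambient support set. So $\C$ is non-intersecting if and only if there is a partition $[n]=A\sqcup B$ with $A,B$ nonempty and nonzero vectors $v\in\C$, $w\in\C$ with $\sigma^{\h}(v)\subseteq A$ and $\sigma^{\h}(w)\subseteq B$. Equivalently, $\C\cap \F_q^A\neq 0$ and $\C\cap\F_q^B\neq 0$, where $\F_q^A$ denotes the coordinate subspace supported on $A$, of dimension $a=|A|$, and $b=n-a$. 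There are fewer than $2^n$ such partitions, a constant independent of $q$. It therefore suffices, for fixed $(A,B)$, to bound the number of $k$-spaces $\C$ containing some nonzero $v\in\F_q^A$ and some nonzero $w\in\F_q^B$.

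Count pairs of projective points $(\langle v\rangle,\langle w\rangle)$ with $v\in\F_q^A$, $w\in\F_q^B$, together with a $k$-space containing the $2$-space $\langle v,w\rangle$. There are about $q^{a-1}\cdot q^{b-1}=q^{n-2}$ such pairs. For each, the number of $k$-spaces containing a fixed $2$-space is $\binom{n-2}{k-2}_q\sim q^{(k-2)(n-k)}$. So the number of bad codes for this partition is $O\bigl(q^{n-2+(k-2)(n-k)}\bigr)$. Comparing with $q^{k(n-k)}$, the exponent deficit is $2(n-k)-(n-2)=n-2k+2$. This is at least $1$ exactly when $n\ge 2k-1$, so the bad proportion is $O(q^{-1})\to 0$. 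This is where both hypotheses enter: $k\ge2$ is needed for the $2$-space to fit inside $\C$, and $n\geq 2k-1$ gives the positive deficit.

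The main obstacle is keeping the estimate clean when $a$ or $b$ is small, and handling degenerate cases such as $k=n$. These cases are covered by the same bound, since the pair count only decreases and $n\ge 2k-1$ with $k>1$ forces $n>k$. I also need the asymptotics $\binom{m}{r}_q=\Theta(q^{r(m-r)})$ with constants depending only on $m$, which is standard. Summing over the boundedly many partitions finishes the proof. One could alternatively phrase the argument via the Schwartz--Zippel lemma on generator matrices, but the direct incidence count above is the route I would take.
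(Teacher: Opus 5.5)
Your proof is correct, but it takes a different route from the paper's. The paper argues entirely through MDS codes. For $n\geq 2k-1$ an MDS code has $d=n-k+1>n/2$ and is therefore intersecting (Example~\ref{MDSint}), and MDS codes are dense as $q\to\infty$ by the asymptotic enumeration result of Byrne and Ravagnani. For $n<2k-1$ it notes that MDS codes are never intersecting, so the intersecting codes avoid a dense family.

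You do two things differently. First, you settle the sparse case with Proposition~\ref{dgeqk}, which shows the family of intersecting codes is actually empty; this is a stronger conclusion and needs no density theorem. Second, you bound the non-intersecting codes directly, by a union bound over the $2^n-2$ coordinate bipartitions combined with an incidence count of point pairs in $\F_q^A\times\F_q^B$ against $k$-spaces through the line they span.

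The paper's argument is two lines but hands all the counting to an external result. Yours is self-contained, needing only $\binom{m}{r}_q=\Theta(q^{r(m-r)})$, and it is quantitatively sharper. It shows the proportion of non-intersecting $[n,k]_q$ codes is $O(q^{-(n-2k+2)})$. By contrast, bounding non-intersecting codes by non-MDS codes cannot give better than $O(q^{-1})$, since non-MDS codes themselves make up a proportion of order $1/q$. Your exponent also explains the threshold: it vanishes exactly at $n=2k-2$, where by Proposition~\ref{dgeqk} every code is non-intersecting.
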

\begin{proof}
 
    If $n\geq 2k-1$, MDS codes are intersecting. By \cite[Corollary 5.3]{RavagnaniByrne2018}, MDS codes are dense as $q \to \infty$, and in particular intersecting codes are dense.

    If $n < 2k-1$, the classes of intersecting codes and that of MDS codes are disjoint. Since MDS codes are dense as $q\to\infty$, again by~\cite[Corollary 5.3]{RavagnaniByrne2018}, intersecting codes are sparse in this parameter range.
\end{proof}

\begin{remark}
    It is shown in \cite{MDSweights} that, aside from some exceptional cases, an MDS code of parameters $[n,k,d]$ contains codewords of all weights $d, d+1, \ldots, n$. Since every set of size $d$ is the support of some codeword, MDS codes are non-minimal. By the same reasoning of~\ref{thm:dens}, $[n,k]$ minimal codes are sparse among $[n,k]$ codes as $q\to\infty$. 
\end{remark}

 Intersecting codes can easily be related to the well studied combinatorial notion of intersecting families. We recall their definition.

\begin{definition}
    Let $X$ be a set and $\mathcal A$ be a collection of subsets of $X$. We say $\mathcal A$ is an \emph{intersecting family} if
    \[A \cap B \neq \emptyset \quad \text{for all } A, B \in \mathcal A.\]
    Given an integer $t \geq 1$, we say $\mA$ is $t$-\emph{intersecting} if
    \[|A \cap B| \geq t  \quad \text{for all } A, B \in \mathcal A.\]
\end{definition}
 Using the above notion one can equivalently define a $t$-intersecting code as a code whose collection of non-empty supports  forms a $t$-intersecting family.  Motivated by this, we recall the famous  Erd\H{o}s-Ko-Rado Theorem.
\begin{theorem}[Erd\H{o}s-Ko-Rado Theorem; see \cite{erdoskorado}]
    Let $\mathcal {A}$ be a $t$-intersecting family of $r$-element subsets of $[n]$, with $n \geq (t+1)(r-t+1)$. Then 
    \[|\mathcal A| \leq \binom{n-t}{r-t}.\]
\end{theorem}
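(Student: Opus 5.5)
Since this is the classical Erd\H{o}s--Ko--Rado theorem in the sharp range due to Wilson, I would follow Wilson's spectral approach rather than shifting. Shifting (compressions) reduces to a shifted $t$-intersecting family, but it only yields the exact threshold $n \geq (t+1)(r-t+1)$ for large $t$.

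\textbf{Reformulation.} Consider the Johnson scheme $J(n,r)$ on the $\binom{n}{r}$ $r$-subsets of $[n]$. Let $A_i$ denote the adjacency matrix of the relation $|A\cap B| = r-i$. A family $\mathcal A$ is $t$-intersecting exactly when it is a coclique (independent set) in the graph with adjacency matrix
\[
D=\sum_{i>r-t} A_i ,
\]
that is, the graph joining $A$ and $B$ whenever $|A\cap B|<t$.

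\textbf{Ratio bound.} I would prove the weighted Hoffman--Delsarte bound. Suppose $M=\sum_{i>r-t} c_i A_i$ is a symmetric matrix in the Bose--Mesner algebra supported only on the forbidden relations. Let $\lambda_0$ be its eigenvalue on the all-ones vector and $\lambda_{\min}$ its least eigenvalue. Then every coclique satisfies
\[
|\mathcal A|\le \binom{n}{r}\frac{-\lambda_{\min}}{\lambda_0-\lambda_{\min}} .
\]
The proof is routine: take $v=\mathbf 1_{\mathcal A}-\frac{|\mathcal A|}{\binom nr}\mathbf 1$, observe that $\mathbf 1_{\mathcal A}^{T} M\,\mathbf 1_{\mathcal A}=0$, and expand using $v^{T}Mv\ge\lambda_{\min}\|v\|^2$.

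\textbf{Choosing the weights.} The target is
\[
\frac{-\lambda_{\min}}{\lambda_0-\lambda_{\min}}=\binom{n-t}{r-t}\Big/\binom nr .
\]
The eigenspaces of the scheme are $V_0,\dots,V_r$. The eigenvalue of $A_i$ on $V_j$ is given by the Eberlein polynomial. Equivalently, it is convenient to work with the inclusion matrices $W_{j}$ (rows indexed by $j$-sets, columns by $r$-sets), since $W_j^{T}W_j$ has a clean spectrum: it is a nonnegative multiple of the projection onto $V_0\oplus\dots\oplus V_j$.

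Following Wilson, I would look for $M$ of the form
\[
M=\sum_{j=0}^{t}\alpha_j\, \overline{W}_{j}^{T}\overline{W}_{j},
\]
where $\overline{W}_j$ is the disjointness matrix from $j$-sets to $r$-sets. The coefficients $\alpha_j$ are chosen so that two conditions hold. First, the coefficients of $A_i$ vanish for $i\le r-t$, so $M$ is supported on the forbidden relations. Second, $M$ has eigenvalue $\lambda_0$ on $V_0$, eigenvalue exactly $\lambda_{\min}$ on $V_1,\dots,V_t$, and eigenvalues $\ge\lambda_{\min}$ on $V_{t+1},\dots,V_r$. The first condition is a triangular linear system, so it determines the $\alpha_j$ up to scaling. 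A direct computation should then give the ratio $\binom{n-t}{r-t}/\binom nr$, using the $t$-star $\{A:[t]\subseteq A\}$ as a sanity check that the bound is attained.

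\textbf{Main obstacle.} The hard step is verifying that the eigenvalues on $V_j$ for $j>t$ are at least $\lambda_{\min}$. These eigenvalues are alternating sums of binomial products. I would first try to show that the sequence of eigenvalues on $V_{t+1},\dots,V_r$ is monotone, or bounded in absolute value by its value at $j=t+1$. I would then check that the single inequality at $j=t+1$ is equivalent to $n\ge (t+1)(r-t+1)$. This is where the hypothesis enters, and I expect it to require careful manipulation of the hypergeometric-type sums that Wilson handles via an explicit identity.
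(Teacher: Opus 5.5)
The paper gives no proof of this statement. It only quotes it, and the sharp range $n\ge (t+1)(r-t+1)$ is really due to Frankl for $t\ge 15$ and to Wilson in general; the 1961 paper needs $n$ much larger. So your proposal has to stand on its own.

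Your framework is the right one: a weighted ratio bound in the Bose--Mesner algebra of $J(n,r)$, with a weight matrix whose eigenvalues coincide on $V_1,\dots,V_t$ so that the $t$-star can be extremal. Your derivation of the ratio bound is fine.

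The weight matrix you propose, however, cannot work, and there is a genuine gap.
\begin{itemize}
\item The $(A,B)$-entry of $\overline{W}_j^{T}\overline{W}_j$ is the number of $j$-sets disjoint from $A\cup B$, namely $\binom{n-2r+s}{j}$ with $s=|A\cap B|$. So every entry of $M=\sum_{j=0}^{t}\alpha_j\overline{W}_j^{T}\overline{W}_j$ equals $p(s)$ for a single polynomial $p$ of degree at most $t$.
\item Your support condition asks $p$ to vanish at the $r-t+1$ points $s=t,\dots,r$. Once $r\ge 2t$ (already in the classical case $t=1$, $r\ge 2$), this forces $p\equiv 0$, hence all $\alpha_j=0$ and $M=0$.
\item So the system is not triangular and does not fix the $\alpha_j$ up to scaling; it is overdetermined.
\item The same problem shows up spectrally. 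Each $\overline{W}_j^{T}\overline{W}_j$ lives on $V_0\oplus\cdots\oplus V_j$, so your $M$ would have eigenvalue $0$ on $V_{t+1},\dots,V_r$. The obstacle in your last paragraph would then never arise.
\item By the same degree count, for $r\ge 2t$ any nonzero matrix supported on the relations $|A\cap B|<t$ must have a nonzero eigenvalue on some $V_h$ with $h>t$.
\end{itemize}

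The repair is to pair inclusion with disjointness and use the top end of the range. Take $W_{jr}^{T}\overline{W}_{jr}$ for $j=r-t+1,\dots,r$, where $W_{jr}$ is the inclusion matrix.
\begin{itemize}
\item Its $(A,B)$-entry $\binom{r-|A\cap B|}{j}$ counts the $j$-subsets of $A\setminus B$, so it vanishes automatically when $|A\cap B|\ge t$.
\item These $t$ matrices are unitriangular in the $A_{(s)}$, $s<t$, so they span exactly the Bose--Mesner matrices supported on the forbidden relations. For $t=1$ you get just the Kneser matrix.
\item Its eigenvalue on $V_h$ is $(-1)^h\binom{r-h}{j-h}\binom{n-j-h}{r-h}$, which is zero for $h>j$. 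Writing $\theta_h$ for the eigenvalue of the combination on $V_h$, the $t-1$ equations $\theta_1=\dots=\theta_t$ fix the $t$ coefficients up to scale.
\item For example, with $t=2$, $r=3$, $n=6$ one gets eigenvalues $8,-2,-2,4$ and the bound $4=\binom{4}{1}$.
\end{itemize}

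Even after this fix, the step that carries the theorem is only announced (``I would first try to show \dots''), not argued. That step is showing $\theta_h\ge\theta_1$ for all $t<h\le r$ whenever $n\ge(t+1)(r-t+1)$. It genuinely fails below the threshold: for $t=2$, $r=4$, $n=8$ one gets $\theta_4=-9<\theta_1=-3$, while at $n=9$ it holds with equality. This inequality is Wilson's main technical lemma and is exactly where the hypothesis on $n$ enters. Without it the proposal does not prove the statement.
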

An instance of family $\mathcal A$ as in the statement of the   Erd\H{o}s-Ko-Rado Theorem is the collection of supports of a given size in an intersecting code. 
For $0\leq i\leq n$ and for a code $\C \le \F_q^n$, we
let
\[W_i(\C)=|\{x\in \C \mid w^{H}(x)=i\}|.\]

\begin{proposition}\label{lemma:upWd}
    Let $\C \le \F_q^n$ be a linear $t$-intersecting code with minimum distance $d$ and $n \geq(t+1)(d-t+1) $. Then,
    \[W_d(\C)\leq (q-1) \binom {n-t}{ d-t}   .\]
\end{proposition}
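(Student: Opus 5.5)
The plan is to reduce the count of minimum-weight codewords to a count of their supports and then apply the Erd\H{o}s-Ko-Rado Theorem to the family of supports of size $d$.

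First I would show that each support of size $d$ is carried by exactly $q-1$ codewords of weight $d$. Let $x,y \in \C$ both have weight $d$ and the same support $S$. Fix $i \in S$ and set $z = y - (y_i/x_i)x$. Then $\sigma^{\h}(z) \subseteq S \setminus \{i\}$, so $w^{\h}(z) \le d-1$. Since $d$ is the minimum distance, this forces $z = 0$, so $y$ is a nonzero scalar multiple of $x$. Conversely, every nonzero multiple of $x$ has support $S$. Hence the map $x \mapsto \sigma^{\h}(x)$ from weight-$d$ codewords to $d$-subsets of $[n]$ has fibres of size exactly $q-1$.

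Next let $\mA = \{\sigma^{\h}(x) : x \in \C,\ w^{\h}(x) = d\}$. By the previous step,
\[
W_d(\C) = (q-1)\,|\mA|.
\]
Every member of $\mA$ has exactly $d$ elements. Because $\C$ is $t$-intersecting, $|A \cap B| \ge t$ for all $A,B \in \mA$, so $\mA$ is a $t$-intersecting family of $d$-subsets of $[n]$.

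Finally, the hypothesis $n \ge (t+1)(d-t+1)$ is exactly the condition in the Erd\H{o}s-Ko-Rado Theorem with $r = d$. That theorem gives $|\mA| \le \binom{n-t}{d-t}$, and multiplying by $q-1$ yields the claimed bound.

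The only step that needs care is the fibre count in the first step, which is where the minimum-weight assumption is used; the other two steps follow directly from the definitions and the Erd\H{o}s-Ko-Rado Theorem.
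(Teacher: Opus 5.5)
Your proposal is correct and follows essentially the same route as the paper. Both arguments show that weight-$d$ codewords with a common support are scalar multiples, by cancelling one coordinate and invoking the minimality of $d$, so that $W_d(\C)=(q-1)|\mA|$; both then apply the Erd\H{o}s--Ko--Rado Theorem with $r=d$ to the $t$-intersecting family $\mA$ of $d$-supports.
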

\begin{proof}
    By definition, the supports of a $t$-intersecting code form an intersecting family. 
    Observe that two codewords $x, y \in \C$ of weight $d$ having the same support are necessarily linearly dependent. To see this, take $i\in \sigma(x)=\sigma(y)$ and consider $z=y_i x -x_iy\in \C$. Clearly $\sigma(z)\subseteq \sigma(x)=\sigma(y)$, hence $w(z)\leq d$. However, $z_i=0$, meaning $w(z)<d$, hence $z=0$ and $x, y$ are linearly dependent.
    The number of distinct $d$-supports in $\C$ is therefore $W_d(\C)/(q-1)$. 
    By the  Erd\H{o}s-Ko-Rado Theorem we therefore have
    \[ \frac{W_d(\C)}{q-1}\leq \binom{n-t}{d-t},\]
    and the result follows.
\end{proof}

If instead of intersecting codes we restrict ourselves to minimal codes, the previous result can be generalized to weights higher than the minimum distance. By definition, a minimal code cannot have linearly independent codewords sharing the same support. This simple observation gives a bound on $W_i(\C)$ for $d\leq i \leq n$ when $\C$ is a minimal code:
\begin{align} \label{eq:Witrivial}
    W_i(\C) \leq (q-1)\binom{n}{i}.
\end{align}
We can actually refine this bound  by applying the  Erd\H{o}s-Ko-Rado Theorem, using the same idea as in Proposition \ref{lemma:upWd}.

\begin{proposition}\label{prop:minimalweight}
     For a minimal, $t$-intersecting linear code $\C \leq \F_q^n$, with $d \leq t-1+\frac{n}{t+1}$, we have
    \[W_i(\C) \leq (q-1)\binom{n-t}{i-t} \quad \text{ for all } i \in \left\{d, \ldots, t-1 + \left\lfloor \frac{n}{t+1} \right\rfloor \right\}.\]
\end{proposition}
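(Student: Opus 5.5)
Fix $i$ with $d \leq i \leq t-1+\lfloor n/(t+1)\rfloor$. The plan is to apply the Erd\H{o}s-Ko-Rado Theorem to the family
\[\mathcal A_i := \{\sigma^{\h}(x) \mid x \in \C,\ w^{\h}(x)=i\}\]
of $i$-element subsets of $[n]$, in the same way as in Proposition~\ref{lemma:upWd}. Minimality will replace the minimum-weight argument used there.

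First, $\mathcal A_i$ is a $t$-intersecting family. Indeed, any two nonzero codewords of $\C$ have supports meeting in at least $t$ coordinates, because $\C$ is $t$-intersecting. This covers in particular two codewords of weight $i$.

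Second, I count codewords per support. Let $x,y\in\C$ have weight $i$ and $\sigma^{\h}(x)=\sigma^{\h}(y)$. Then $\sigma^{\h}(y)\subseteq\sigma^{\h}(x)$, and since $x$ is a minimal codeword, $y=\lambda x$ for some $\lambda\in\F_q^*$. So each support in $\mathcal A_i$ is shared by exactly $q-1$ codewords, which gives $W_i(\C)=(q-1)|\mathcal A_i|$. This is the observation behind~\eqref{eq:Witrivial}.

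Third, I check the hypothesis of the Erd\H{o}s-Ko-Rado Theorem with $r=i$, namely $n\geq (t+1)(i-t+1)$. From $i\leq t-1+\lfloor n/(t+1)\rfloor$ we get $i-t+1\leq \lfloor n/(t+1)\rfloor\leq n/(t+1)$, so $(t+1)(i-t+1)\leq n$. The assumption $d\leq t-1+n/(t+1)$ only ensures that the range of $i$ is nonempty. The theorem then yields $|\mathcal A_i|\leq\binom{n-t}{i-t}$, and hence $W_i(\C)\leq (q-1)\binom{n-t}{i-t}$.

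There is a minor edge case: for $i<t$ the binomial coefficient would vanish. This cannot occur, because any nonzero codeword $x$ satisfies $|\sigma^{\h}(x)\cap\sigma^{\h}(x)|\geq t$, so $d\geq t$ and hence $i\geq t$. The empty case $\mathcal A_i=\emptyset$ is trivial. I do not expect any genuine obstacle; the only point needing care is the arithmetic check of the Erd\H{o}s-Ko-Rado length condition.
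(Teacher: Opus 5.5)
Your proposal is correct and follows the same route as the paper. Minimality gives exactly $q-1$ codewords per weight-$i$ support, and you then apply the Erd\H{o}s--Ko--Rado Theorem to the $t$-intersecting family of these supports; you are more explicit than the paper in checking the length condition $n \geq (t+1)(i-t+1)$ and in noting that $d \geq t$.
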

\begin{proof}
    Since $\C$ is minimal, for all $i \in \{d, \ldots, n\}$,  the number of distinct supports of size $i$ is $W_i(\C)/(q-1)$. The argument is the same as in the proof of Proposition \ref{lemma:upWd}. The  Erd\H{o}s-Ko-Rado theorem then gives
    \begin{equation*}
        W_i(\C) \leq (q-1)\binom{n-t}{i-t} 
    \end{equation*} 
    for weights $i\leq  t-1 + \lfloor \frac{n}{t+1} \rfloor. $
\end{proof}

 Recall that a family of sets $\mA$ is \emph{non-trivial} if $\bigcap_{A\in\mA}A=\emptyset$. For non-trivial families of $r$-element sets, the  Erd\H{o}s-Ko-Rado bound is improved by the Hilton-Milner Theorem.
\begin{theorem}[Hilton–Milner, \cite{hiltonmilner}]\label{thm:HiltonMilner}
     Let $\mathcal {A}$ be a non-trivial intersecting family of $r$-element subsets of $ [n]$, with $n \geq  2 r $. Then
     \[|\mathcal A| \leq \binom{n-1}{r-1}-\binom{n-r-1}{r-1}+1.\]
\end{theorem}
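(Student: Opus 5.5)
The statement is the classical Hilton--Milner theorem, quoted from the literature, so the plan is the standard shifting/Kruskal--Katona style argument, or more economically Frankl's short proof via the Katona circle bound. I would aim for the latter. Fix a non-trivial intersecting family $\mA$ of $r$-subsets of $[n]$ with $n\ge 2r$. If $r=1$ non-triviality is impossible, and if $n=2r$ the bound equals $\binom{2r-1}{r-1}$, which is the Erd\H{o}s--Ko--Rado bound (via pairing complementary sets), so we may assume $r\ge 2$ and $n\ge 2r+1$.

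\emph{Step 1 (shifting).} Apply the shifting operators $S_{ij}$ for $i<j$, replacing $j$ by $i$ in a set $A$ whenever the result is not already in the family. Shifting preserves the size and the intersecting property, but it may destroy non-triviality. So I would shift only as long as the family stays non-trivial. If some shift $S_{ij}$ would make the family trivial, then every set of $\mA$ meets $\{i,j\}$, and the sets avoiding $i$ all contain $j$, and vice versa. I would handle this situation directly by a counting argument: the family splits into the sets containing $i$ but not $j$, the sets containing $j$ but not $i$, and the sets containing both, and cross-intersection constraints between the first two parts give the bound. Otherwise we reach a shifted non-trivial intersecting family.

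\emph{Step 2 (the shifted case).} For a shifted non-trivial family, $[2,r+1]\in\mA$ after relabelling, together with further structure. Split $\mA=\mA_1\cup\mA_0$ according to whether $1\in A$. The family $\mA_0$ is nonempty, and every member of $\mA_1$ must meet every member of $\mA_0$. The key inequality to prove is
\[
|\mA_1|\le \binom{n-1}{r-1}-\binom{n-r-1}{r-1}+1-|\mA_0|,
\]
i.e.\ each extra set in $\mA_0$ kills at least one set of the star. I would prove this with the Kruskal--Katona theorem applied to the cross-intersecting pair consisting of $\{A\setminus\{1\}: A\in\mA_1\}$ (a family of $(r-1)$-sets) and $\mA_0$ (a family of $r$-sets) in $[2,n]$. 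The complements of $\mA_0$ in $[2,n]$ have size $n-1-r$, and their $(r-1)$-shadow must be disjoint from $\mA_1'$. Kruskal--Katona shows that the shadow has size at least $\binom{n-r-1}{r-1}-1+|\mA_0|$ once $|\mA_0|\ge1$, with the extremal configuration $\mA_0=\{B\}$.

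\emph{Main obstacle.} The hard step is this shadow estimate: showing that the minimum of $|\partial(\text{complements})|-|\mA_0|$ is attained at $|\mA_0|=1$. This needs $n\ge 2r+1$, so that $n-1-r\ge r$ and the lower shadow is genuinely large. I would first try the Lovász form of Kruskal--Katona: writing $|\mA_0|=\binom{x}{n-1-r}$ gives a shadow of at least $\binom{x}{r-1}$, and the map $x\mapsto\binom{x}{r-1}-\binom{x}{n-1-r}$ is then shown to be increasing on the relevant range. Comparing this at $x=n-r-1$ (one set) and at larger $x$ yields the claimed bound, with equality exactly for the Hilton--Milner family.
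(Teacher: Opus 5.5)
The paper does not prove this statement; it quotes it from \cite{hiltonmilner}, so your sketch has to stand on its own. The reductions for $r=1$ and $n=2r$ and the shifting set-up are fine. Step~2, however, has a genuine gap. The shadow bound you attribute to Kruskal--Katona is false under the only hypothesis you use, $\mA_0\neq\emptyset$, even when $\mA_0$ is intersecting.

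Take $\mA_0=\{B\in\binom{[2,n]}{r} : 2\in B\}$, which has size $\binom{n-2}{r-1}$. Its complements in $[2,n]$ are all the $(n-1-r)$-subsets of $[3,n]$. Their $(r-1)$-shadow is $\binom{[3,n]}{r-1}$, of size exactly $|\mA_0|$, which is less than $\binom{n-r-1}{r-1}-1+|\mA_0|$ whenever $r\ge 2$ and $n\ge 2r+1$. For $r=2$, $n=5$ the shadow has $3$ elements, not the required $4$.

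In Lov\'asz form this is the point $x=n-2$. There $f(x)=\binom{x}{r-1}-\binom{x}{n-1-r}$ equals $0$, while $f(n-r-1)=\binom{n-r-1}{r-1}-1>0$. Since $n-1-r>r-1$, $f$ is eventually decreasing; for $r=2$ it decreases on all of $[n-3,\infty)$. So $f$ is not increasing on any range that reaches the Erd\H{o}s--Ko--Rado size of $\mA_0$. Your heuristic that $n\ge 2r+1$ makes the shadow ``genuinely large'' is backwards: a larger $n-1-r$ is exactly what makes $\binom{x}{n-1-r}$ outgrow $\binom{x}{r-1}$.

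This is not a technicality. Pair this $\mA_0$ with $\mA_1=\{A\in\binom{[n]}{r}:\{1,2\}\subseteq A\}$. The two form a cross-intersecting pair with $\mA_0\neq\emptyset$, and their union is the full star at $2$, which exceeds the Hilton--Milner bound. So your key inequality cannot follow from cross-intersection and non-emptiness of $\mA_0$ alone.

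What is missing is an actual use of shiftedness, and hence of non-triviality, in Step~2. For example, if $\mA$ is shifted and intersecting, then $\mA_0$ is $2$-intersecting. Indeed, if $B,B'\in\mA_0$ met only in $x$, then $(B\setminus\{x\})\cup\{1\}\in\mA$ would be disjoint from $B'$. This caps $|\mA_0|$: when $n\ge 3r-2$, the $t=2$ case of Erd\H{o}s--Ko--Rado on $[2,n]$ gives $|\mA_0|\le\binom{n-3}{r-2}$, i.e.\ $x\le n-3$; smaller $n$ needs more work. You would then still have to show $f(x)\ge f(n-r-1)$ on the restricted range. Monotonicity is the wrong target even there: for $r=3$ the shifted extremal family $\{F:|F\cap[3]|\ge 2\}$ gives equality at both $x=n-4$ and $x=n-3$.

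Separately, the ``counting argument'' in Step~1 is really the cross-intersecting Hilton--Milner lemma. It states that non-empty cross-intersecting $\mathcal B,\mathcal C\subseteq\binom{[n]\setminus\{i,j\}}{r-1}$ satisfy $|\mathcal B|+|\mathcal C|\le\binom{n-2}{r-1}-\binom{n-r-1}{r-1}+1$; adding the at most $\binom{n-2}{r-2}$ sets through both $i$ and $j$ then gives the bound. This lemma is of the same depth as the theorem, so it must be stated and proved or cited, not waved through.

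Finally, you announce Frankl's Katona-circle proof but never carry it out.
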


The Hilton-Milner can be used to improve on the bound of Proposition~\ref{lemma:upWd} when the supports for a  non-trivial family. 

\begin{corollary}
     Let $\C \le \F_q^n$ be a linear intersecting code with minimum distance $d$, with $n \geq 2d$. Suppose there is no coordinate that is non-zero on all codewords of weight $d$. Then
    \[W_d(\C)\leq (q-1)\left( \binom{n-1}{d-1}-\binom{n-d-1}{d-1}+1 \right) .\]
\end{corollary}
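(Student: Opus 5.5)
The plan is to mirror the proof of Proposition~\ref{lemma:upWd}, replacing the Erd\H{o}s-Ko-Rado Theorem with the Hilton-Milner Theorem (Theorem~\ref{thm:HiltonMilner}). Let
\[\mA=\{\sigma^{\h}(x) \mid x\in\C,\ w^{\h}(x)=d\}\]
be the family of distinct supports of minimum-weight codewords. Each member of $\mA$ is a $d$-element subset of $[n]$.

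First, $\mA$ is intersecting, since $\C$ is an intersecting code and every pair of nonzero codewords has intersecting supports. Second, $\mA$ is non-trivial. The hypothesis says that no coordinate is non-zero on all codewords of weight $d$, which means exactly that $\bigcap_{A\in\mA}A=\emptyset$. Third, $n\geq 2d$ with $r=d$ is precisely the size condition required by Theorem~\ref{thm:HiltonMilner}. Hilton-Milner therefore gives
\[|\mA|\leq \binom{n-1}{d-1}-\binom{n-d-1}{d-1}+1 .\]

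Next I relate $|\mA|$ to $W_d(\C)$. The proof of Proposition~\ref{lemma:upWd} showed that two weight-$d$ codewords with the same support are linearly dependent. Take $i$ in the common support and form $y_ix-x_iy$; it has weight less than $d$, so it must vanish. Hence each support in $\mA$ is realised by exactly $q-1$ codewords, the nonzero scalar multiples of one codeword, and $W_d(\C)=(q-1)|\mA|$. Multiplying the Hilton-Milner bound by $q-1$ gives the claim.

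There is no real obstacle here. The only care needed is the degenerate case $W_d(\C)=0$, which is trivial and in fact does not arise when $\C\neq\{0\}$, and checking that ``non-trivial'' in the paper's sense matches the hypothesis about coordinates. Both are immediate.
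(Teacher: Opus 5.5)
Your proposal is correct and matches the paper's proof: both apply the Hilton--Milner Theorem to the non-trivial intersecting family $\mA$ of weight-$d$ supports, then convert the bound using $W_d(\C)=(q-1)|\mA|$. You are more explicit than the paper about that conversion, since you justify it with the linear-dependence argument from Proposition~\ref{lemma:upWd}, which the paper's proof uses only implicitly.
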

\begin{proof}
    Under these assumptions the family $\mA$ of weight $d$ codewords of $\C$ is a non-trivial intersecting family. Hence, by \ref{thm:HiltonMilner}, 
    \begin{equation*}
        \frac{W_d(\C)}{q-1}=|\mA|\leq \binom{n-1}{d-1}-\binom{n-d-1}{d-1}+1. \qedhere\end{equation*}
\end{proof}

Similarly to our previous results, if the code is minimal, we can apply Theorem \ref{thm:HiltonMilner} on the number of codewords of weight $i > d$.

\begin{corollary}
     Let $\C \le \F_q^n$ be a minimal code with minimum distance $d$, with $n \geq 2i$. Suppose there is no coordinate that is non-zero on all codewords of weight $i$ for a given $d\leq  i\leq n/2$. Then
    \[W_i(\C)\leq (q-1)\left( \binom{n-1}{i-1}-\binom{n-i-1}{i-1}+1 \right) .\]
\end{corollary}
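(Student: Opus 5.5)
The plan is to mirror the proof of the preceding corollary for weight $d$, replacing the minimum-distance argument by the minimality of $\C$. Fix $i$ with $d \le i \le n/2$. Let $\mA_i$ be the family of supports of codewords of weight exactly $i$:
\[\mA_i := \{\sigma^{\h}(x) \mid x \in \C,\ w^{\h}(x) = i\}.\]
This is a family of $i$-element subsets of $[n]$. The argument has three steps.

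First, I will show that $\mA_i$ is an intersecting family. Since $\C$ is minimal, it is intersecting, as observed after the definition of minimal codes. Two nonzero codewords that are linearly dependent share the same nonempty support, so they intersect trivially. Two linearly independent codewords have intersecting supports by that same observation. Hence any two members of $\mA_i$ intersect.

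Second, I will count codewords per support. I claim each support in $\mA_i$ is the support of exactly $q-1$ codewords of weight $i$. Suppose $x,y$ have the same support. Then $\sigma^{\h}(y)\subseteq\sigma^{\h}(x)$, and minimality of $x$ forces $y=\lambda x$; since $y \neq 0$, we get $\lambda\in\F_q^*$. Conversely, every nonzero scalar multiple of $x$ has the same support. This is the same observation used to justify~\eqref{eq:Witrivial}. Hence $W_i(\C) = (q-1)|\mA_i|$.

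Third, I will check the hypotheses of Theorem~\ref{thm:HiltonMilner}. Non-triviality of $\mA_i$, namely $\bigcap_{A\in\mA_i}A=\emptyset$, is exactly the assumption that no coordinate is nonzero on all codewords of weight $i$. The size condition $n\ge 2i$ is assumed. Theorem~\ref{thm:HiltonMilner} then gives $|\mA_i|\le\binom{n-1}{i-1}-\binom{n-i-1}{i-1}+1$. Multiplying by $q-1$ yields the stated bound.

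If $\mA_i$ is empty, the bound holds trivially, so no separate case is needed there. There is no substantial obstacle in this argument; the only point that needs care is the second step. For $i>d$ the earlier reduction argument (producing a lighter codeword) does not apply, so the one-to-$(q-1)$ correspondence between codewords and supports must come from minimality itself.
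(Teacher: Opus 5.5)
Your proposal is correct and follows the same route as the paper. Minimality gives exactly $q-1$ codewords per weight-$i$ support, those supports form a non-trivial intersecting family of $i$-sets, and Theorem~\ref{thm:HiltonMilner} then applies; your write-up also supplies the details (notably the one-to-$(q-1)$ correspondence via minimality) that the paper's one-line proof leaves implicit.
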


\begin{proof}
    The statement follows from the fact that the collection of support of codewords of weight $d \leq i \leq n/2$ forms a non-trivial intersecting family. 
\end{proof}

\section{Matroid connectivity}  \label{sec:4}
Linear codes have been extensively investigated in connection with matroids; see~\cite{barg,britz2007higher,jurrius2013codes,britz2010code, ghorpade2022shellability, J2013} among many others. Some code properties are fully captured by the associated matroid, such as the property of being an MDS code. In this section, 
we link the notion intersecting codes to matroid theory, and more specifically matroid connectivity. We start by recalling the definition of a matroid (we give the definition via the rank function) and other useful definitions. For more information on matroid we refer the reader to \cite{Oxley}.

\begin{definition} 
   Let $E$ be a finite set and  $r: 2^{E} \rightarrow \mathbb N$ a function on its power set. The pair $M = (E , r)$ is a \emph{{matroid}} if, for all $A, B \in 2^E$, it satisfies:
    \begin{itemize}
        \item[(R1)]  $0 \leq r(A) \leq |A|$.
        \item[(R2)] if $A \subseteq B$ then $r(A) \leq r(B)$.
        \item[(R3)] $r(A \cap B) + r(A \cup B) \leq r(A) + r(B)$ (submodularity).
    \end{itemize}
    The set $E$ is called the \emph{{ground set}} of the matroid and $r$ is its \emph{{rank function}}.
\end{definition}

We will need some basic matroid-theoretic notions, which are summarized in the definition below.
\begin{definition}
Let $M=(E, r)$ be a matroid and $X\subseteq E$. Then
    \begin{itemize}
         \item  $X\subseteq E$ is an \emph{independent set} for $M$ if $r(E)=|E|$, and \emph{dependent} if $r(E)<|E|$;
         \item  A subset $X\subseteq E$ is a \emph{circuit}  for $M$ if $r(E)=|E|-1$, and all its proper subsets are independent.
    \end{itemize}
    Moreover,
    \begin{itemize}
        \item The \emph{dual matroid} of $M$ is $M^*=(E, r^*)$, with rank function given by \[r^*(X)=|X|-r(E)+r(X^c).\]
            \item $X$ is a \emph{cocircuit} of  $M$ if it is a circuit for  $M^*$.
       
    \end{itemize}
\end{definition}

A common way to construct a matroid is by looking at the linear dependencies between columns of a matrix. Suppose $\mathbb F$ is a field and $G$ is a $k\times n$ matrix over $\mathbb F$. Indexing the columns of $G$ as $[n]$, we can build a matroid $M(G)=([n], r)$, such that $r:2^{[n]}\to\mathbb N$ and $r(A) = \rk (G_A)$, where $\rk$ is the classical rank of a matrix and $G_A$ is the submatrix of $G$ consisting of columns indexed by elements of $A$.  A matroid that can be constructed in this way starting from a matrix is called a \emph{representable} matroid. Using the notion of generator matrices for vector spaces, we can associate a matroid to a linear code.
\begin{definition}
    Let $\C \subseteq \F_q^n$ be a linear code. Its \textbf{associated matroid} is $M(G)$, where $G$ is a generator matrix for $\C$. This independent of the choice of $G$. 

\end{definition}

Note that one can easily check that the associated matroid to a code is independent of the choice of generator matrix, hence the above definition is well-defined. 

Connectivity is a classical graph-theoretic notion that measures how hard it is to disconnect a graph. A subset of the vertex set of connected a graph is called a \emph{vertex cut} if removing the corresponding vertices and deleting all edges attached to them produces a disconnected graph. There are several ways to extend this idea to matroids, most notably \emph{Tutte} and \emph{vertical} connectivity. In this paper we will focus on \emph{vertical} connectivity, which was first introduced independently in 1981 in three different papers \cite{Inukaivert, cunningham1981matroid, Oxley1981}. We will sometimes refer to it as simply \emph{connectivity}. For a thorough treatment of matroid connectivity and its various incarnations we refer the reader to \cite[Chapter~8]{Oxley}.

\begin{definition}
The \textbf{connectivity function} of a matroid $M=(E, r)$ is $\lambda_M: 2^E\to \mathbb N$ 
\[\lambda_M(X)\coloneqq r(X)+r(E\setminus X)-r(M)=r(X)+r^*(X)-|X|.\]
The submodularity of the rank function ensures that $\lambda_M(X)\geq 0$ for all $X\subseteq E$.
\end{definition}
In this context, the analog of a vertex-cut is called a vertical separation, and is defined as follows.

\begin{definition}
   Let $M$ be  a matroid of rank $k$. For a positive integer $t$, a partition $(X, X^c )$ of the ground set of $M$ is
a \textbf{vertical $t$-separation} of $M$ if $\lambda_M(X) < t$ and $\min \{r(X), r(Y)\} \geq t$. 
\end{definition}
We remark here that the word ``vertical'' is the adjective corresponding to
the noun ``vertex''. 
We are now ready to give the definition of vertical connectivity for a matroid. Recall that the minimal size of a vertex cut is the graph-theoretic notion of connectivity that we want to apply to matroids. Having defined vertical separations, the vertical connectivity of a matroid is defined as follows.

\begin{definition}
    Let $M=(E, r)$ be a matroid that has a vertical separation for some $1 \leq j\leq r(M)-1$. The \textbf{vertical connectivity} of $M$ is 
    \[\kappa(M)=\min\{j \mid M \text{ has a vertical $j$-separation}\}.\]
    If $M$ has no vertical separations we set \[\kappa(M)=r(E),\]
    in which case we say $M$ is \emph{fully connected}.
\end{definition}

\begin{remark}
    If $\Gamma$ is a connected graph and $M(\Gamma)$ is the corresponding circuit matroid (see for example \cite[Chapter~1]{Oxley}), then \[\kappa(M(\Gamma))=\kappa(\Gamma),\]
    which is the main reason why vertical connectivity is introduced by the authors in \cite{Inukaivert, cunningham1981matroid, Oxley1981}, in contrast to Tutte connectivity, which is not as well behaved with respect to graphic matroids. The main drawback of vertical connectivity is that, unlike Tutte connectivity, it is not invariant under duality, as can be seen by looking at uniform matroids.
\end{remark}

\begin{example} \label{vert_unif}
    Consider the uniform matroid $U_{k,n}$, its vertical connectivity is given by
    \[\kappa(U_{k,n})=
    \begin{cases}
        n-k+1 & \text{if } n\leq 2k-2, \\
        k & \text{otherwise}.
    \end{cases}\]
    In particular, $\kappa(U_{3, 4})=2$, while $\kappa(U_{3,4}^*)=\kappa(U_{1,4})=1$.
\end{example}

There is a relation between the existence of a vertical separation and the cocircuit structure of a matroid. Although the following result is well known (see for example \cite[Lemma~8.6.2]{Oxley}), we include a proof for self-containment.

\begin{proposition}\label{prop:disjcirc}
     Let $M = (E, r)$ be a matroid. Then $M$ has a vertical  $t$-separation for some $t \leq r(M)-1$ if and only if $M$ has two disjoint cocircuits. 
\end{proposition}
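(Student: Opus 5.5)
The plan is to translate everything into statements about hyperplanes (flats of rank $r(M)-1$), using the standard fact that cocircuits are exactly the complements of hyperplanes. We also use that any set $X$ with $r(X)<r(M)$ lies in some hyperplane.

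Write $k=r(M)$ and let $(X,Y)$ be a partition of $E$, with $Y=X^c$. The condition $\lambda_M(X)<t$ reads $r(X)+r(Y)<k+t$.

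\emph{($\Rightarrow$)} Suppose $(X,Y)$ is a vertical $t$-separation with $t\le k-1$. First I show that $r(X)<k$; by symmetry the same holds for $Y$.
\begin{itemize}
\item If $r(X)=k$, then $\lambda_M(X)<t$ gives $r(Y)<t$.
\item This contradicts the requirement $r(Y)\ge t$.
\end{itemize}
Hence $r(X)\le k-1$, so $X$ is contained in a hyperplane $H_1$. Likewise $Y\subseteq H_2$ for some hyperplane $H_2$. Now $H_1\cup H_2\supseteq X\cup Y=E$, so the complements satisfy $H_1^c\cap H_2^c=\emptyset$. Both complements are nonempty cocircuits, so $M$ has two disjoint cocircuits.

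\emph{($\Leftarrow$)} Let $C_1,C_2$ be disjoint cocircuits, with $C_i=E\setminus H_i$ for hyperplanes $H_i$. Disjointness gives $H_1\cup H_2=E$. Set $X=H_1$ and $Y=E\setminus H_1=C_1$.
\begin{itemize}
\item Since $Y\subseteq H_2$, we have $r(Y)\le k-1$.
\item Also $Y=C_1$ is nonempty, since a cocircuit is nonempty; here we use that $H_1\neq E$ because $r(H_1)=k-1<k$.
\item Hence $r(X)=k-1$ and $1\le r(Y)\le k-1$.
\end{itemize}
A loop has rank $0$, so $r(Y)\ge1$ needs $Y$ to contain a non-loop. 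This holds because a hyperplane is closed and contains every loop, so every element of $C_1$ is a non-loop.

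Now take $t=r(Y)$. Then $\min\{r(X),r(Y)\}=r(Y)=t$, since $r(Y)\le k-1=r(X)$. Also
\[
\lambda_M(X)=(k-1)+t-k=t-1<t,
\]
and $t\le k-1$. So $(X,Y)$ is a vertical $t$-separation with $t\le r(M)-1$.

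The main care point is the reverse direction: choosing the partition and the value of $t$ so that both rank conditions and $\lambda_M(X)<t$ hold together. Taking $X$ to be a full hyperplane makes $\lambda_M(X)=r(Y)-1$, which settles it. The facts used are that complements of hyperplanes are cocircuits and that hyperplanes contain all loops; both are standard from \cite{Oxley}.
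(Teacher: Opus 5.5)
Your proof is correct and follows the paper's argument. In the forward direction you derive $r(X),r(X^c)\le k-1$ exactly as the paper does and then extract disjoint cocircuits via hyperplanes, which is equivalent to the paper's observation that $X$ and $X^c$ are dependent in $M^*$. In the reverse direction you use the same partition $(C_1, E\setminus C_1)$ with $t=r(C_1)$, and your check that $t\ge 1$ (cocircuits contain no loops) fills in a detail the paper leaves implicit.
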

\begin{proof}
    If $M$ has a vertical $t$-separation $(X, X^c )$, then
    \[t+t-k\leq r(X)+r(X^c)-k\leq t-1,\] which implies $t \leq k-1$, $r(X)\leq k-1$ and $r(X^c)\leq k-1$.
    Therefore,
\begin{align*}
    r^*(X)&=|X|-r(M)+r(X^c)  \leq |X|-k+k-1  =|X|-1, \\ 
    r^*(X^c)&=|X^c|-r(M)+r(X)  \leq |X^c|-k+k-1 =|X^c|-1.
\end{align*} 
    Thus $X, X^c$ are both  dependent in $M^*$ and hence contain disjoint cocircuits of $M$.
    Conversely, suppose $M$ has two disjoint cocircuits $C_1^*$ and $C_2^*$. Then
    \[ r({C_1^*}^c)=r({C_2^*}^c)=r(M)-1. \]
     Let $t=r(C_1^*)$. Since $C_1^*$ and $C_2^*$ are disjoint,
    \[ t = r(C_1^*) \leq r({C_2^*}^c)=r(M)-1.\]
    Consider the partition $(C_1^*, {C_1^*}^c)$. We have that 
    \[ \min\{r(C_1^*), r({C_1^*}^c)\}=\min\{t, r(M)-1\}=t \]
    and 
    \[
    \lambda(C_1^*)=r(C_1^*) + r({C_1^*}^c) -r(M) = t+r(M)-1-r(M)=t-1<t,
    \]
    therefore $(C_1^*, {C_1^*}^c)$ is a vertical $t$-separation.
\end{proof}

The vertical connectivity of the matroid associated to a linear code gives information about the structure of its generator matrices. Given a linear code $\C$ and a set of coordinates $A\subseteq [n]$ we denote by $\C(A)=\{v\in \C \mid \sigma^{\h}(v)\subseteq A\}$. Clearly, $\C(A)$ is a (potentially $0$) subspace of $\C$.

\begin{proposition}
    Let $M$ be the matroid induced by an $[n, k, d]$-code $\C \leq \F_q^n$. If $M$ has vertical connectivity $t<\dim(\C)$ then, up to permutation of columns, there exists a generator matrix $G$ of $\C$ of the form 
    $$G =  
        \left(
    \begin{array}{c} 
      \begin{array}{c | c} 
        G_1 & 0\\ 
        \hline
         0 & G_2 \\
      \end{array}   \\ 
      \hline 
     -  B -
     \end{array} 
    \right),
    $$
    where $G_1, G_2$ combined have $k-(t-1)$ rows, and $B$ has $t-1$ rows.
\end{proposition}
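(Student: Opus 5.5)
Since $\kappa(M)=t<k$, by definition $M$ has a vertical $t$-separation $(X,X^c)$. In other words, $\lambda_M(X)=r(X)+r(X^c)-k\le t-1$ and $\min\{r(X),r(X^c)\}\ge t$. The plan is to read the block structure of $G$ off the subcodes $\C(X^c)$ and $\C(X)$ defined just before the statement.

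First we compute their dimensions. The restriction map $\C\to\F_q^{X}$ has kernel $\C(X^c)$ and image of dimension $r(X)$, since $r(X)=\rk(G_X)$. Hence $\dim\C(X^c)=k-r(X)$, and symmetrically $\dim\C(X)=k-r(X^c)$. The supports of these two subcodes lie in disjoint coordinate sets, so $\C(X)\cap\C(X^c)=\{0\}$ and their sum is direct, of dimension
\[
\dim\C(X)+\dim\C(X^c)=2k-r(X)-r(X^c)=k-\lambda_M(X)\ge k-(t-1).
\]

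Next we build the generator matrix. Permute columns so that $X$ comes first. Take a basis of $\C(X)$; its vectors vanish outside $X$, so they form the rows $(G_1\mid 0)$. Take a basis of $\C(X^c)$; these form the rows $(0\mid G_2)$. Extend the union of the two bases to a basis of $\C$; the added vectors form the rows of $B$. Then $G_1$ and $G_2$ together have $k-\lambda_M(X)$ rows and $B$ has $\lambda_M(X)$ rows.

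The main obstacle is that the statement asks for exactly $t-1$ rows in $B$, whereas the separation only gives $\lambda_M(X)\le t-1$. I would close this gap using the minimality of $t$. Suppose $\lambda_M(X)=s-1$ with $s<t$. Then $\min\{r(X),r(X^c)\}\ge t>s$, so $(X,X^c)$ would also be a vertical $s$-separation, contradicting $\kappa(M)=t$. Therefore $\lambda_M(X)=t-1$ exactly, giving $k-(t-1)$ rows in $G_1,G_2$ combined and $t-1$ rows in $B$. This completes the plan.
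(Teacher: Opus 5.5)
Your proposal is correct and follows essentially the same route as the paper: you take a vertical $t$-separation $(X,X^c)$, use $\dim\C(X^c)=k-r(X)$ and $\dim\C(X)=k-r(X^c)$ to get $\dim\bigl(\C(X)\oplus\C(X^c)\bigr)=k-\lambda_M(X)$, and extend a basis of this direct sum to a basis of $\C$, with the extra vectors forming the rows of $B$. You then use minimality of $t$ to force $\lambda_M(X)=t-1$; your check that $\lambda_M(X)=s-1$ with $1\le s<t$ would make $(X,X^c)$ a vertical $s$-separation simply spells out what the paper attributes to ``the minimality in the definition of connectivity.''
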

\begin{proof}
Since $M$ has vertical connectivity $\kappa(M)=t<\dim(\C)$, by definition, there exists  a subset $A\subseteq [n]$ with $r(A)\geq t$,  $r(A^c)\geq t$  such that 
\[\lambda(A)=r(A)+r(A^c)-r(M)<t.\]
Therefore, $\lambda(A)<t\leq r(A).$ By the minimality in the definition of connectivity, $\lambda(A)\geq t-1.$ We conclude that $\lambda(A)=t-1.$

 Recall that $r(A)=r(M)-\dim(\C(A^c))=k-\dim(\C(A^c))$.
  Using this fact we obtain that 
\begin{align} \label{block_struct}
   0 \leq \dim \C(A) +\dim \C(A^c) = k-(t-1), \text{ hence } k\geq t-1,
\end{align}
and, clearly, $\C(A)$ and $\C(A^c)$ are in direct sum.
Let $\mathscr B \leq \C$ be  a complement to $\C(A), \C(A^c)$, i.e 
\[\C = \C(A) \oplus \C(A^c) \oplus \mathscr B, \quad \dim(\mathscr B)=t-1. \]
If $k>t-1$ then at least one among $\C(A)$ and $\C(A^c)$ is a nonzero subspace. Without loss of generality, up to permuting some coordinates, suppose that $A=\{1, \ldots, m\}$, then a  generator matrix of $\C$ can be written in the form

   \begin{align*} \label{block_struct}
       G= \left(
    \begin{array}{c} 
      \begin{array}{c | c} 
        G_1 & 0\\ 
        \hline
         0 & G_2 \\
      \end{array}   \\ 
      \hline 
     -  B -
     \end{array} 
    \right),
   \end{align*}  
where $B$ has $t-1$ rows and spans $\mathscr B$, while $G_1$, $G_2$ combined have $k-(t-1)$ rows and each have at least $t$ columns (more precisely, the submatrix corresponding to the columns of $G$ that contain $G_1$ has rank at least $t$, and likewise for $G_2$), 
$\left(\begin{array}{c | c} 
        G_1 & 0
\end{array}\right)$
generates $\C(A)$, and $\left(\begin{array}{c | c} 
        0 & G_2
    \end{array} \right)$
generates $\C(A^c)$.
        If both blocks are nonempty, then $d(\C)\leq \min\{|A|, |A^c|\}$. If one of the two blocks is empty, say $G_2$, then $G$ is of the form 
\[   G = \left(
    \begin{array}{c} 
      \begin{array}{c | c} 
        G_1 & 0\\ 
      \end{array}   \\ 
      \hline 
     - B -
     \end{array}
     \right)
\]
and $d(C)\leq |A|$.
\end{proof}
The previous proposition shows in particular that, if a code is the direct sum of two smaller codes (i.e. its generator matrix can be expressed in a block-diagonal form), then the connectivity of the code is $1$. This is in line with the analog statement for matroids. 

\begin{remark}
    Note that if $\C$ has generator matrix of the  \begin{align*} \label{block_struct}
       G= \left(
    \begin{array}{c} 
      \begin{array}{c | c} 
        G_1 & 0\\ 
        \hline
         0 & G_2 \\
      \end{array}   \\ 
      \hline 
     -  B -
     \end{array} 
    \right),
   \end{align*}
   it is not necessarily true that $\kappa(M(G)) = t$ where $t$ is one plus the number of rows of $B$.  For example,
    \begin{align*} 
       G= \left(
    \begin{array}{c} 
      \begin{array}{ccc | ccc} 
        1 & 1 & 1 & 0 & 0 \\ 
        \hline
          0 & 0 & 0 & 1 & 1  \\
      \end{array}   \\ 
      \hline 
      \begin{array}{cccccc} 
        1 & 1 & 0 & 0 & 1 \\
        0 & 1 & 1 & 0 & 1 
     \end{array} 
     \end{array}
    \right) \in \F_2^{5\times 6},
   \end{align*}
   actually generates a code of connectivity $2$, as elementary operations on the rows produce a generator matrix of the same code of the form
   \begin{align*} 
       G'= \left(
    \begin{array}{c} 
      \begin{array}{ccc | ccc} 
        1 & 1 & 1 & 0 & 0 \\ 
         1 & 0 & 1 & 0 & 0 \\
        \hline
          0 & 0 & 0 & 1 & 1  \\
      \end{array}   \\ 
      \hline 
      \begin{array}{cccccc} 
        0 & 1 & 1 & 0 & 1 
     \end{array} 
     \end{array}
    \right).
   \end{align*}
\end{remark}

For a linear code $\C$, we denote by $\kappa(\C)$ the vertical connectivity of the associated matroid. Recall the following, which we prove for completeness.
\begin{lemma}\label{mincodeword}
    A codeword $x\in\C$ is minimal if and only if its support is a cocircuit in the corresponding matroid.
\end{lemma}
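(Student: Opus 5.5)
The plan is to translate everything into statements about the subspaces $\C(A)=\{v\in\C \mid \sigma^{\h}(v)\subseteq A\}$, using the rank identity already used earlier in this section: for a generator matrix $G$ of $\C$ and $A\subseteq[n]$,
\[r(A)=\rk(G_A)=k-\dim \C(A^c),\]
since the kernel of $v\mapsto v_A$ on $\C$ is exactly $\C(A^c)$. Via the dual rank formula $r^*(X)=|X|-k+r(X^c)$, this gives $r^*(X)=|X|-\dim\C(X)$. Hence $X$ is independent in $M^*$ if and only if $\C(X)=0$, that is, no nonzero codeword has support contained in $X$. (Here I read the paper's definition of independence and circuits with $X$ in place of $E$, as clearly intended.)

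From this I get a clean description of cocircuits: $X$ is a cocircuit if and only if $\C(X)\neq 0$ and $\C(Y)=0$ for every proper subset $Y\subsetneq X$. First, the cocircuits are the supports of nonzero codewords. If $X$ is a cocircuit, pick $0\neq v\in\C(X)$. Then $\sigma^{\h}(v)\subseteq X$ and $\C(\sigma^{\h}(v))\neq0$, so minimality of $X$ forces $\sigma^{\h}(v)=X$. Second, $\dim\C(X)=1$ for a cocircuit $X$. Indeed, $r^*(X)=|X|-1$ by the circuit condition, so $\dim\C(X)=|X|-r^*(X)=1$. Alternatively, one can argue as in the proof of Proposition~\ref{lemma:upWd}: for $v,w\in\C(X)$ with full support $X$, the combination $w_iv-v_iw$ vanishes at some $i\in X$, hence lies in some $\C(Y)$ with $Y\subsetneq X$, and so is $0$.

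Now the two directions. For ($\Leftarrow$), suppose $X=\sigma^{\h}(x)$ is a cocircuit and $w\in\C$ has $\sigma^{\h}(w)\subseteq\sigma^{\h}(x)$. Then $w\in\C(X)$, which is one-dimensional and contains $x$, so $w=\lambda x$ and $x$ is minimal. For ($\Rightarrow$), let $x\neq0$ be minimal and $X=\sigma^{\h}(x)$. Any $w\in\C(X)$ satisfies $\sigma^{\h}(w)\subseteq\sigma^{\h}(x)$, so $w\in\langle x\rangle$ and $\dim\C(X)=1$, giving $r^*(X)=|X|-1$. For a proper subset $Y\subsetneq X$, any $w\in\C(Y)$ is a multiple of $x$ whose support is not all of $X$, so $w=0$. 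Thus $\C(Y)=0$, i.e.\ $Y$ is independent in $M^*$, and $X$ is a cocircuit. The zero codeword is excluded, or handled by the convention that it is not a minimal codeword of interest; I would remark this explicitly.

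The only real obstacle is the rank identity $r(A)=k-\dim\C(A^c)$ and its dual consequence $r^*(X)=|X|-\dim\C(X)$. I would verify it once carefully via the kernel of the column projection, after which everything is bookkeeping.
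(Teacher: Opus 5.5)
Your proof is correct and complete. The paper announces that it proves this lemma ``for completeness'', but no proof follows the statement in the source, so there is no argument of the authors' to compare against.

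Your route through the identity $r^*(X)=|X|-\dim\C(X)$ is the standard one. It rests on the relation $r(A)=k-\dim\C(A^c)$, which the paper itself uses in its block-structure proposition. Your alternative argument for $\dim\C(X)=1$ is the same $y_ix-x_iy$ trick as in the proof of Proposition~\ref{lemma:upWd}.

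Both of your caveats are genuine and worth keeping:
\begin{itemize}
\item The paper's definitions of independent set and circuit have $E$ where $X$ is meant.
\item Under the paper's definition, $0$ is vacuously a minimal codeword, while $\emptyset$ is never a cocircuit since $r^*(\emptyset)=0$. So the lemma must be read for nonzero $x$. This is also how it is used in the proof of Theorem~\ref{thm:conn_int}, where only nonzero minimal codewords are meaningful.
\end{itemize}

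One small wording fix: ``the cocircuits are the supports of nonzero codewords'' should say that every cocircuit is the support of some nonzero codeword. The converse is false, and only this direction is what you prove and use.
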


We can finally show that vertical connectivity characterizes the intersecting property of a code. 

\begin{theorem}\label{thm:conn_int}
     A code $\C$ is intersecting if and only if its vertical connectivity equals its dimension, $\kappa(\C)=\dim(\C)$.
\end{theorem}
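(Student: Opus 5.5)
The plan is to combine Proposition~\ref{prop:disjcirc} with Lemma~\ref{mincodeword}. By definition, $\kappa(M)=r(M)=\dim(\C)$ holds exactly when $M$ has no vertical $t$-separation for any $t\le r(M)-1$. Note that any vertical $t$-separation automatically has $t\le r(M)-1$: the first computation in the proof of Proposition~\ref{prop:disjcirc} gives $2t-k\le t-1$. Hence Proposition~\ref{prop:disjcirc} reduces the theorem to the following claim.

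\emph{Claim:} $\C$ is intersecting if and only if the associated matroid $M$ has no two disjoint cocircuits.

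For the bridge between cocircuits and supports I will use Lemma~\ref{mincodeword}: the cocircuits of $M$ are exactly the supports of minimal codewords of $\C$. I expect to use two further standard facts, both of which I will verify directly.

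First, every minimal codeword is a nonzero codeword, so its support is a nonempty support in $\C$.

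Second, every nonzero codeword $x$ has a minimal codeword $y$ with $\sigma^{\h}(y)\subseteq\sigma^{\h}(x)$. To see this, choose a nonzero $y\in\C$ whose support is inclusion-minimal among nonzero supports contained in $\sigma^{\h}(x)$. Suppose $w\in\C$ satisfies $\sigma^{\h}(w)\subseteq\sigma^{\h}(y)$ and $w\neq 0$. Pick $i\in\sigma^{\h}(w)$; then $y_iw-w_iy$ has strictly smaller support, so it vanishes. This is the same argument as in Proposition~\ref{lemma:upWd}, and it shows $w$ is a multiple of $y$, so $y$ is minimal.

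With these facts the two directions go as follows.

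($\Rightarrow$) Suppose $\C$ is intersecting and $M$ has two disjoint cocircuits. By Lemma~\ref{mincodeword} they are the supports of two nonzero codewords, and these supports are disjoint. This contradicts the intersecting property, so $M$ has no disjoint cocircuits, no vertical separation, and $\kappa(\C)=\dim\C$.

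($\Leftarrow$) Suppose $\C$ is not intersecting. Then there are nonzero $x,x'\in\C$ with $\sigma^{\h}(x)\cap\sigma^{\h}(x')=\emptyset$. By the second fact, pick minimal codewords $y,y'$ whose supports lie inside $\sigma^{\h}(x)$ and $\sigma^{\h}(x')$ respectively. Their supports are disjoint, and by Lemma~\ref{mincodeword} they are cocircuits of $M$. Proposition~\ref{prop:disjcirc} then gives a vertical $t$-separation with $t\le r(M)-1$, so $\kappa(\C)<\dim\C$.

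The main obstacle is bookkeeping with the definitions rather than any real difficulty. In particular I need to check that the convention $\kappa(M)=r(E)$ when there is no separation matches $\dim\C=\operatorname{rk}G$. I also need to handle degenerate cases: when $k=1$, any two nonzero codewords are proportional, so $\C$ is trivially intersecting, and Proposition~\ref{prop:disjcirc} correctly finds no separation. Finally, I will confirm that Lemma~\ref{mincodeword} identifies cocircuits with supports of minimal codewords, and not merely with supports of arbitrary codewords; the reduction via the second fact covers this.
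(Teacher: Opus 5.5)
Your proposal is correct and follows the same route as the paper. Proposition~\ref{prop:disjcirc} turns $\kappa(\C)=\dim(\C)$ into the absence of two disjoint cocircuits, Lemma~\ref{mincodeword} turns this into the absence of minimal codewords with disjoint supports, and one then passes to arbitrary nonzero codewords; your argument that every nonzero support contains the support of a minimal codeword (via $y_iw-w_iy$) just fills in the step the paper dismisses with ``naturally''.
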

\begin{proof}
    By definition, the connectivity of the code equals its dimension precisely when no $j$-separations exist for any $j\in \{1, \ldots, \dim(\C)-1\}$. By  Proposition~\ref{prop:disjcirc} this is equivalent to the corresponding matroid having no disjoint cocircuits. By Lemma~\ref{mincodeword}, this corresponds to $\C$ having no minimal codewords with disjoint supports. Naturally, this is equivalent to $\C$ not having any non-zero codewords with disjoint supports, i.e. $\C$ is intersecting.
\end{proof}

\section{Connectivity of $q$-matroids and  rank-metric codes}  \label{sec:5}

A natural  question is whether the link between matroid connectivity and intersecting codes
 extends to their respective $q$-analogs, i.e., \textit{$q$-matroids} and rank-metric codes. These two classes of objects have been intensively studied in connection with each other over the past decade, see for example~\cite{ghorpade2022shellability,gluesing2025cloud,jany2023projectivization,Jurriusqmatroid,degen2024most,alfarano2023critical,byrne2023cyclic,byrne2022constructions,alfarano2024cyclic}.

In this section we will recall the definition of a $q$-matroid and propose a notion of connectivity for $q$-matroids and a corresponding definition of intersecting rank-metric code.

 The rank weight $\operatorname{rk}(v)$ of a vector $v \in  \F_{q^m}^n$ is the $\F_q$-dimension of the
    $\F_q$-linear space generated by its entries.
    A \textbf{vector-rank-metric code} is an $\F_{q^m}$ -linear 
    subspace $\C\subseteq \F_{q^m}^n$ equipped with the rank distance
 \[d_{\rk}(u, v) = \operatorname{rk}(u-v).\]

For a detailed treatment of rank-metric codes we refer to \cite{Gorla2018}. In this section, we only consider codes under the rank-metric and no longer under the Hamming distance. For that reason, we omit the subscript under the rank distance function. We next recall the notion of rank support of a codeword.
\begin{definition}[Rank-metric support]
    Let $\{\gamma_1, \ldots, \gamma_m\}$ be a basis of $\F_{q^m}$ over $\F_q$, and fix $v=(v_1, \ldots, v_n)\in \F_{q^ m}^ n$. Let $\Gamma(v) = (a_{ij})\in \F_q^{m \times n}$ be the matrix such that, for all ,
    \[v_i=\sum_{j=1}^ m a_{ij}\gamma_j.\] The \textbf{rank-metric support} of $v\in \F_{q^m}^n$ is $\sigma^{\operatorname{rk}}(v)=\operatorname{colsp}(\Gamma(v)) \le \F_q^n$.
\end{definition}
This notion of support yields the notion of a vector rank-metric intersecting code by requiring that supports intersect non-trivially.

\begin{definition}
    Let $\C\subseteq \F_{q^m}^n$ be a vector-rank-metric code. We say $\C$ is \textbf{intersecting} if, for all nonzero $v,w \in \C$, $\sigma^{\operatorname{rk}}(v)\cap\sigma^{\operatorname{rk}}(w)\neq \{0\}$.
\end{definition}

Intersecting rank-metric codes were introduced and investigated in \cite{bartoli2025linearrankmetricintersectingcodes}, however, in their definition the authors consider the rowspace of matrix $\Gamma(v)$ as the support of $v$. 

Just like for classical matroids and intersecting codes, we will show in the remainder of this section how $q$-matroid vertical connectivity relates to  intersecting rank-metric codes.

The notion of $q$-matroid was reintroduced by Jurrius and Pellikaan in \cite{Jurriusqmatroid}. One of their motivations was  the fact that a rank-metric code can be associated to a $q$-matroid. We first recall the definition of q-matroids.

\begin{definition}[$q$-matroid]
     Let $E$ be a finite-dimensional vector space 
     and $\rho: \mathcal L(E) \rightarrow \mathbb N$, where $\mathcal{L}(E)$ is the collection of subspaces of $E$. The pair $\mcM = (E , \rho)$ is a \textbf{{$q$-matroid}} if for all $V, W \in \mathcal L(E)$ the following hold:
    \begin{itemize}
        \item[($q$R1)] $\rho(V) \leq \dim V$,
        \item[($q$R2)] if $V \leq W$ then $\rho(V) \leq \rho(W)$,
        \item[($q$R3)]$\rho(V \cap W) + \rho(V + W) \leq \rho(V) + \rho(W).$
    \end{itemize}
    We refer to $E$ as the \textbf{{ground space}} and $\rho$ as the \textbf{{rank function}} of the $q$-matroid.
    
    Two q-matroids $\mcM_1 = (E_1, \rho_1), \mcM_2 = (E_2, \rho_2)$ are said to be \textbf{equivalent}, and denoted $\mcM_1 \sim ~\mcM_2$, if there exists a lattice isomorphism $\varphi : \mcL(E_1) \rightarrow \mcL(E_2)$ such that,  $\rho_1(V)= \rho_2(\varphi(V))$  for all $V~\in~ \mcL(E_1)$.
\end{definition}

\begin{example}
    Let $E=\F_q^n$ and $k\leq n$. The uniform $q$-matroid is $U_{k,n}=(E, \rho)$, where 
    \[\rho(V)=\min\{\dim(V), k\}\]
    for all $V\le E$. 
\end{example}

Similarly to classical matroid, we can define the following. Given a $q$-matroid $\mcM = (E,\rho)$, a subspace $V \leq E$ is independent if $\rho(V) = \dim(V)$ otherwise it is dependent. A circuit is minimal dependent space w.r.t. inclusion. Just like for matroids, each of those collections of spaces fully determine the $q$-matroid, see \cite{Jurriusqmatroid,byrne2022constructions} for more detail.

At the end of their seminal paper where $q$-matroids are introduced, Jurris and Pellikaan propose a wish-list of results or notions that are known for classical matroids and could be extended to $q$-matroids.
Among them is the notion of connectivity for a $q$-matroid. We propose the following definition of vertical connectivity for $q$-matroids.

\begin{definition}[Vertical connectivity for $q$-matroids]\label{def:qconn}
    Let $M=(E, \rho)$ be a $q$-matroid and $t>0$ be an integer. A pair of subspaces $(A,V) \in \mathcal{L}(E) \times \mathcal{L}(E)$ is a \textbf{vertical $t$-separation} if :
    \begin{itemize}
        \item $A\cap V=\{0\}$, $A+V=E$ (i.e., $V$ is a complement of $A$),
        \item $\min\{\rho(A), \rho(V)\} \geq t$,
        \item  $\lambda(A, V)\coloneqq \rho(A)+\rho(V)-\rho(E)<t$.
    \end{itemize}
   We say $\mcM$ is \textbf{vertically $t$-connected} if it has no $j$-separations for $j\in \{1, \ldots, t-1\}$ and we define the \textbf{vertical connectivity} of $M$ as
    \[\kappa(\mathcal M)=\min\{t : \mathcal M \text{ has a vertical $t$-separation}\}.\]
    If $\mathcal M$ does not have any vertical separations, we set $\kappa(\mathcal M)=\rho(E)$.
\end{definition}

Before going further, we first show that our notion of vertical separation is $q$-matroid invariant, that is, it is preserved under $q$-matroid equivalence. 

\begin{proposition}\label{prop:conequiv}
    Given two equivalent $q$-matroids $\mcM_1 = (E_1, \rho_1)$ and $\mcM_2 = (E_2, \rho_2)$ and $\varphi : \mcL(E_1) \rightarrow\mcL(E_2)$ the rank-preserving lattice isomorphism.  A pair $(A, V) \in \mcL(E_1) \times \mcL(E_1)$ is a vertical $t$-separation in $\mcM_1$ if and only if $(\varphi(A), \varphi(V))$ is a vertical $t$-separation in $\mcM_2$.
    In particular  $\kappa(\mcM_1) = \kappa(\mcM_2)$. 
\end{proposition}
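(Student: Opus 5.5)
The plan is to check that each of the three defining conditions of a vertical $t$-separation is preserved by $\varphi$, using only that $\varphi$ is a lattice isomorphism with $\rho_1(V)=\rho_2(\varphi(V))$ for all $V\in\mcL(E_1)$. Since $\varphi^{-1}$ is also a rank-preserving lattice isomorphism, it suffices to prove one implication; the converse follows by applying the same argument to $\varphi^{-1}$.

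\emph{Complement condition.} A lattice isomorphism preserves meets and joins, as well as the bottom and top elements. Meet in $\mcL(E)$ is intersection, join is sum, the bottom is $\{0\}$, and the top is $E$. Hence we get
\[
\varphi(A)\cap\varphi(V)=\varphi(A\cap V)=\varphi(\{0\})=\{0\},\qquad \varphi(A)+\varphi(V)=\varphi(A+V)=\varphi(E_1)=E_2 .
\]
Here I should justify that an order isomorphism of lattices sends the minimum to the minimum and the maximum to the maximum, and preserves infima and suprema. This follows because $\varphi$ is a bijection that preserves order in both directions. I expect this to be the only step needing a sentence of care, particularly if the paper's notion of ``lattice isomorphism'' is merely order-preserving.

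\emph{Rank conditions.} Rank preservation gives $\rho_2(\varphi(A))=\rho_1(A)$ and $\rho_2(\varphi(V))=\rho_1(V)$, so the condition $\min\{\rho(A),\rho(V)\}\ge t$ transfers directly. Since $\varphi(E_1)=E_2$, we also have $\rho_2(E_2)=\rho_1(E_1)$. Therefore $\lambda_2(\varphi(A),\varphi(V))=\lambda_1(A,V)<t$.

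\emph{Conclusion.} The pairs that are $t$-separations in $\mcM_1$ correspond bijectively, via $\varphi$, to those in $\mcM_2$, for every $t$. The sets of $t$ admitting a separation therefore coincide, so their minima agree. If neither $q$-matroid has a separation, then $\kappa(\mcM_1)=\rho_1(E_1)=\rho_2(E_2)=\kappa(\mcM_2)$.
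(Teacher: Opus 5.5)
Your proposal is correct and matches the paper's proof essentially step by step. In both, the lattice isomorphism preserves meet, join, $\{0\}$ and the whole space, which gives the complement condition; rank preservation (with $\rho_2(E_2)=\rho_1(E_1)$) transfers the two rank conditions; and the converse comes from applying the argument to $\varphi^{-1}$. You also make explicit the case with no separations ($\kappa=\rho(E)$), which the paper leaves implicit.
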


\begin{proof}
Let $(A, V)$ be a vertical $t$-separation of $\mcM_1$ and consider $(\varphi(A), \varphi(V))$. Since $\varphi$ is a lattice isomorphism, we have $\varphi(A) \cap \varphi(V) = \{0\}$ and $\varphi(A) + \varphi(V) = E_2.$ Furthermore, since $\varphi$ is rank-preserving, we have $\min\{\rho_2(\varphi(A)), \rho_2(\varphi(V))\} = \min\{\rho_1(A), \rho_1(V)\} \geq t$, and similarly $\lambda(\varphi(A),\varphi(V)) = \lambda(A, V) < t$. 
The converse follows in a similar way using the fact that $\varpi$ has an inverse. The claim $\kappa(\mcM_1) = \kappa(\mcM_2)$ then follows. 
\end{proof}

By applying our definition to uniform $q$-matroids, we obtain a direct analog of Example~\ref{vert_unif}.

\begin{example}[Vertical connectivity of uniform $q$-matroids]
    The vertical connectivity of the uniform $q$-matroid $U_{k, n}$ is given by
    \[
    \kappa(U_{k, n})=
    \begin{cases}   
        n-k+1 & \text{if } n \leq 2k-2, \\
        k & \text{otherwise}.
    \end{cases}
\]
To see this, first suppose  $n \leq 2k-2$.
Take $A \le \mathbb F_q^n$ of dimension $n-k+1$ and a complement~$V$ of $A$. We have that:
\begin{itemize}
    \item $\dim A = n-k+1 \leq k-1$, hence $r(A) = \dim A =n-k+1,$ 
    \item $\dim V = k-1$, hence $r(V)= \dim V = k-1,$ 
    \item $\min\{\rho(A), \rho(V) \} = n-k+1$,
    \item $\lambda(A, V)=\rho(A)+\rho(V)-\rho(E)=n-k+1+k-1-k=n-k<n-k+1$.
\end{itemize}
Therefore, $(A,V)$ is a separation and $\kappa(U_{k, n}) \leq n-k+1$. To see that equality holds, take $A \oplus V = E$ such that $\dim A = t \leq n-k$. Then $\lambda(A, V) = t -k+k =t$, i.e. $(A, V)$ is not a separation. Hence  $\kappa(U_{k, n}) = n-k+1$.
\end{example}

Just like in the classical case, the existence of a vertical separation is linked to the cocircuit structure of the $q$-matroid. One needs to be slightly more careful, since the duality now depends on a choice of inner product, but the spirit of the result is the same. 

\begin{definition}
    Let $\mcM=(E, \rho)$ be a $q$-matroid and fix an inner product $\perp$ on $E$ (i.e. a nondegenerate bilinear form on $E$). The dual $q$-matroid of $\mcM$ is $\mcM^* =(E, \rho^*)$, where 
    \[\rho^*(X)= \dim X -\rho(E)+\rho(X^\perp),\]
    and the orthogonal is taken with respect to the fixed inner product.
\end{definition}

It was shown in \cite[Thm. 42]{Jurriusqmatroid}  that if $\mcM^*$ and ${\mcM^{*}}'$ are dual $q$-matroids of $\mcM$ with respect to inner products $\perp, \perp'$ on $E$ respectively, then $\mcM^* \sim {\mcM^*}'$. For that reason, for the remaining of the paper and except if stated otherwise, $\mcM^*$ is the dual $q$-matroid w.r.t the standard inner product. 

Much like in the classical case, the existence of separations in a $q$-matroid is linked to the support structure of its dual, which we show next by establishing a $q$-analogue of Proposition~\ref{prop:disjcirc}. 

\begin{theorem}\label{prop:qmatconcirc}
    Let $\mcM = (E, \rho)$ be a $q$-matroid and $\mcM^*$ its dual $q$-matroid.  There exists a vertical $t$-separation, with $t \leq \rho(E)-1$ for $\mcM$ if and only if there exist two disjoint circuits in $\mcM^*$.

\end{theorem}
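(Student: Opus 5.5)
The plan is to follow the proof of Proposition~\ref{prop:disjcirc}, but to use orthogonal complements in place of set complements. In the $q$-analogue the dual rank of $X$ involves $\rho(X^\perp)$, not $\rho$ of a complement of $X$. The two lattice facts we rely on are $(A+V)^\perp = A^\perp\cap V^\perp$ and $(A\cap V)^\perp = A^\perp + V^\perp$, which hold for the standard nondegenerate form. Throughout, write $k=\rho(E)$. Here "disjoint" circuits means circuits $C_1,C_2$ with $C_1\cap C_2=\{0\}$.

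\emph{Forward direction.} Let $(A,V)$ be a vertical $t$-separation. Exactly as in Proposition~\ref{prop:disjcirc}, the inequalities $2t-k\le \rho(A)+\rho(V)-k\le t-1$ give $t\le k-1$, and then $\rho(A),\rho(V)\le k-1$. Set $X=A^\perp$. Then $X^\perp=A$, so
\[\rho^*(A^\perp)=\dim A^\perp - k+\rho(A)\le \dim A^\perp-1.\]
Hence $A^\perp$ is dependent in $\mcM^*$, and likewise $V^\perp$ is dependent. Each dependent space contains a circuit of $\mcM^*$. Since $A+V=E$, we get $A^\perp\cap V^\perp=(A+V)^\perp=\{0\}$. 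So the two circuits meet trivially.

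\emph{Converse.} Let $C_1,C_2$ be circuits of $\mcM^*$ with $C_1\cap C_2=\{0\}$. A circuit satisfies $\rho^*(C_i)=\dim C_i-1$, and unwinding the definition of $\rho^*$ gives $\rho(C_i^\perp)=k-1$. Moreover $C_1^\perp+C_2^\perp=(C_1\cap C_2)^\perp=E$.

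\begin{itemize}
\item Put $V:=C_1^\perp$.
\item Choose $A$ to be a complement of $C_1^\perp\cap C_2^\perp$ inside $C_2^\perp$.
\end{itemize}

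This choice gives $A\cap V=\{0\}$, and $A+V\supseteq C_2^\perp + C_1^\perp = E$, so $A$ and $V$ are complementary. Let $t=\rho(A)$. Since $A\le C_2^\perp$, monotonicity gives $t\le k-1$. Then $\min\{\rho(A),\rho(V)\}=\min\{t,k-1\}=t$, and
\[\lambda(A,V)=t+(k-1)-k=t-1<t.\]

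It remains to check that $t\ge1$, which the definition requires of the integer $t$. Apply submodularity ($q$R3) to $A$ and $V$. Since $A\cap V=\{0\}$ and $A+V=E$, this gives $\rho(A)+\rho(V)\ge \rho(E)$, i.e.\ $\lambda(A,V)\ge 0$, hence $t\ge 1$. Therefore $(A,V)$ is a vertical $t$-separation with $t\le \rho(E)-1$.

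The main obstacle is the converse. The two spaces $C_1^\perp$ and $C_2^\perp$ need not form a direct sum, so they cannot be used directly as the separating pair. This is resolved by trimming $C_2^\perp$ to a complement of $C_1^\perp$ lying inside it. Monotonicity keeps its rank at most $k-1$, which is all the rank computation needs.
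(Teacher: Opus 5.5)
Your proof is correct, but it handles the inner product differently from the paper.

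In the forward direction, the paper re-chooses the bilinear form so that $V=A^{\perp'}$. It then shows that $A$ and $V$ themselves are dependent in the dual taken with respect to that form. Finally, it transports the two circuits back to the standard dual using the equivalence of duals under a change of form (\cite[Thm.~42]{Jurriusqmatroid}).

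In the converse, the paper enlarges the circuits to complementary spaces $A\supseteq B$ and $V\supseteq W$, and picks a form with $A^{\perp'}=V$. It shows that $(A,V)$ is a separation of the double dual $\widehat{\mcM}=(\mcM^*)^{*'}$. It then returns to $\mcM$ via $\widehat{\mcM}\sim\mcM$ and Proposition~\ref{prop:conequiv}.

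You instead keep the standard form throughout and put orthogonal complements where the classical proof has set complements:
\begin{itemize}
\item In the forward direction, $A^\perp$ and $V^\perp$ are dependent, and they meet trivially because $(A+V)^\perp=\{0\}$.
\item In the converse, $V=C_1^\perp$ together with a complement $A$ of $C_1^\perp\cap C_2^\perp$ inside $C_2^\perp$ is already a complementary pair in $\mcM$.
\end{itemize}

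What your route buys:
\begin{itemize}
\item It is self-contained: it needs neither the change-of-form theorem nor Proposition~\ref{prop:conequiv}.
\item It gives the sharper conclusion that, for the fixed dual, the two circuits lie inside $A^\perp$ and $V^\perp$. This is the exact $q$-analogue of Proposition~\ref{prop:disjcirc}, where $X^c$ plays both the complement and the orthogonal role. It also pins down what the remark after the theorem leaves vague.
\item It builds the separation of $\mcM$ itself explicitly from the circuits.
\item You verify $t\ge 1$ via submodularity, which the paper leaves implicit.
\item You use only inequalities. The paper's computation of $\widehat{\rho}(V)$ writes $\rho^*(A)=\dim A-1$, although only $\le$ is available for the enlarged space $A$; this is harmless, since $\le$ suffices.
\end{itemize}

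The paper's route, in exchange, mirrors the classical proof literally, with $A$ and $V$ themselves containing the circuits. It achieves this only by passing to a form adapted to the splitting and to an equivalent $q$-matroid.
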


\begin{proof}
    First recall that given two inner products $\perp$, $\perp'$, the resulting and respective dual $q$-matroid $\mcM^*, \mcM^{*'}$ are equivalent.
    Since a rank-preserving lattice isomorphism maps circuits to circuits and preserves preserves intersections of two spaces, 
 to prove $\Rightarrow$, it is sufficient to show that the statement holds for one choice of inner product.

 \smallskip

\noindent($\Rightarrow$) Let $(A, V)$ be a vertical $t$-separation of $\mcM$. Then 
    \[t+t-r(E)\leq \rho(A)+\rho(V)-\rho(E)\leq t-1.\]
    Therefore 
    \begin{align*}
        t \leq \rho(E)-1
    \end{align*}
    and 
    \begin{align*}
        \rho(A)+\rho(V) \leq \rho(E)+t-1 \leq 2\rho(E)-2, \\
        \max\{\rho(A), \rho(V)\}\leq \rho(E)-1.
    \end{align*}
    Now fix an inner product with respect to which $V=A^\perp$ and consider the dual $q$-matroid $\mathcal M^*=(E, \rho^*)$, where
    \[\rho^*(X)= \dim X - \rho(E)+ \rho(X^\perp).\]
   Then
    \begin{align*}
        \rho^*(A)&= \dim A- \rho(E)+ \rho(A^\perp)   \leq \dim A -\rho(E)+\rho(E)-1  =\dim A-1, \\ 
        \rho^*(V)&= \dim V-\rho(M)+\rho(V^\perp)   \leq \dim V -\rho(E)+\rho(E)-1 =\dim V-1,
    \end{align*} 
    thus $A, V$ are both dependent in $\mathcal M^*$ and hence contain trivially intersecting circuits of $\mcM^*$.

\medskip

\noindent($\Leftarrow$) Let $B, W \leq E$ be disjoint circuits of $\mcM^*$. Now let $A, V \leq E$ such that $B \leq A$, $W \leq V$ and $A \oplus V = E$. Select an inner product $\perp'$ such that $A^{\perp'}= V$.  Consider $\widehat{\mcM} := (\mcM^*)^{*'}$ and denote its rank function by $\widehat{\rho}$.
Note that $\widehat{\mcM} \sim \mcM$ since they are both dual $q$-matroids of $\mcM^*$ w.r.t. different inner products.
Hence, according to Proposition \ref{prop:conequiv},  $\widehat{\mcM}$ has a vertical $t$-separation if and only if $\mcM$ has a vertical $t$-separation. We therefore show that, as constructed, $(A,V)$ is a $t$-separation of $\widehat{\mcM}$, where $t \leq \widehat{\rho}(E) -1$
Since $B \leq A$ is a circuit of $\mcM^*$, $A$ must be dependent and thus  $\rho^*(A) \leq \dim(A) - 1$. Therefore, 
\begin{align*}
    \widehat{\rho}(V) &= \dim V + \rho^*(V^{\perp'}) - \rho^*(E)\\
                  &= \dim V + \rho^*(A) - n + \widehat{\rho}(E)\\
                  &= \dim V + \dim A - 1 -n + \widehat{\rho}(E)\\
                  &= \widehat{\rho}(E) -1.
\end{align*}
In a similar way, $\widehat{\rho}(A) \leq \widehat{\rho}(E) -1$.
Let $t:= \min\{\widehat{\rho}(A), \widehat{\rho}(V)\} \leq \rho(E) -1.$  Without loss of generality, let $t = \widehat{\rho}(A)$. Clearly, $t \leq \min\{\widehat{\rho}(A), \widehat{\rho}(V)\}$. Furthermore we get
\begin{align*}
\widehat{\rho}(A) + \widehat{\rho}(V) - \widehat{\rho}(E) &\leq \widehat{\rho}(A) + \widehat{\rho}(E) -1 - \widehat{\rho}(E)\\
&= \widehat{\rho}(A) -1 = t-1.
\end{align*}
This shows that $(A, V)$ is a $t$-separation of $\widehat{\mcM}$.
\end{proof}

\begin{remark}
    Note in Theorem \ref{prop:conequiv}, one can only conclude the existence of two disjoint circuits in $\mcM^*$  given a vertical $t$-separation in $\mcM$ and vice versa. Unlike Proposition \ref{prop:disjcirc} for classical matroids, it is not true that given a $t$-separation $(A,V)$ of a $q$-matroid $\mcM$, that the disjoint circuits of $\mcM^*$ will respectively contained in $A$ and $V$ regardless of the inner product used to define $\mcM^*$. From the proof of Theorem \ref{prop:conequiv}, we can only conclude that there exists at least one inner product such that both $A$ and $V$  respectively contain circuits of $\mcM^*$ with respect to that inner product.   
\end{remark}

If we reformulate the previous results in terms of vector rank-metric codes, we obtain a $q$-analog of Theorem \ref{thm:conn_int}.

\begin{theorem}
    Let $\C \leq \F_{q^m}^n$ and $\mcM$ its induced $q$-matroid. Then $\mcM$  has a $t$-vertical separation if and only if $\C$ has two minimal codewords with disjoint support. Equivalently, $\C$ is intersecting if and only if $\kappa(\mcM)=\dim(\C)$.
\end{theorem}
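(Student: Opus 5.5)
The plan is to reduce the statement to Theorem~\ref{prop:qmatconcirc}, following the route of Theorem~\ref{thm:conn_int}. The missing ingredient is a $q$-analogue of Lemma~\ref{mincodeword} relating codewords of $\C$ to circuits of $\mcM^*$.

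Recall the standard description of the $q$-matroid induced by $\C$ with generator matrix $G \in \F_{q^m}^{k\times n}$: for $X \le \F_q^n$ with a basis matrix $Y \in \F_q^{n\times \dim X}$, the rank is $\rho(X)=\rk(GY)$. Equivalently,
\[
\rho(X)=k-\dim_{\F_{q^m}}\C(X^\perp), \qquad \C(U):=\{v\in\C \mid \sigma^{\rk}(v)\le U\}.
\]
This uses the fact that $v=xG$ has $\sigma^{\rk}(v)\le U$ if and only if $vY=0$ for every $Y$ spanning $U^\perp$.

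Substituting this into the dual rank function with respect to the standard inner product gives
\[
\rho^*(X)=\dim X-k+\rho(X^\perp)=\dim X-\dim_{\F_{q^m}}\C(X).
\]
Hence $X$ is dependent in $\mcM^*$ exactly when $\C(X)\neq 0$, that is, when $X$ contains the support of a nonzero codeword. Consequently the circuits of $\mcM^*$ are the inclusion-minimal supports of nonzero codewords; call these codewords \emph{minimal}, in analogy with the Hamming case. Checking this minimality correspondence carefully is the analogue of Lemma~\ref{mincodeword}, and it is the step I expect to need the most care. The delicate point is that a circuit $X$ must actually equal a support, and not merely contain one.

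\emph{Argument for that step.} Suppose $X$ is a circuit of $\mcM^*$. Then $\C(X)\neq0$, so pick a nonzero $v\in\C(X)$. Its support $\sigma^{\rk}(v)\le X$ is itself dependent in $\mcM^*$, since it contains the support of $v$. By minimality of the circuit, $\sigma^{\rk}(v)=X$. The converse direction, that minimal supports are circuits, is immediate.

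With the correspondence in hand, the first claim follows. By Theorem~\ref{prop:qmatconcirc}, $\mcM$ has a vertical $t$-separation with $t\le\rho(E)-1$ if and only if $\mcM^*$ has two trivially intersecting circuits. By the correspondence, this holds if and only if $\C$ has two minimal codewords whose supports meet only in $\{0\}$. Every vertical separation automatically satisfies $t\le\rho(E)-1$, as shown in the first step of that proof, so no generality is lost.

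For the equivalence with the intersecting property, note that $\kappa(\mcM)=\rho(E)=\dim\C$ exactly when no vertical separation exists. Here $\rho(E)=k$ because $G$ has full rank.
\begin{itemize}
\item If two nonzero codewords $v,w$ have trivially intersecting supports, choose minimal codewords whose supports lie inside $\sigma^{\rk}(v)$ and inside $\sigma^{\rk}(w)$ respectively. Such codewords exist because supports form a finite poset. Their supports are then also trivially intersecting, so a separation exists.
\item Conversely, two minimal codewords with trivially intersecting supports directly witness that $\C$ is not intersecting.
\end{itemize}
Together these show that $\C$ is intersecting if and only if $\kappa(\mcM)=\dim(\C)$.
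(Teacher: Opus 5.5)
Your proof is correct and follows the paper's route: reduce to Theorem~\ref{prop:qmatconcirc} via the correspondence between circuits of $\mcM^*$ and supports of minimal codewords. The only differences are that the paper cites \cite[Lemma 5.10]{alfarano2024cyclic} for that correspondence and treats the passage from arbitrary nonzero codewords to minimal ones as immediate, whereas you derive the correspondence yourself from $\rho^*(X)=\dim X-\dim_{\F_{q^m}}\C(X)$ and spell out that reduction explicitly.
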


\begin{proof}
 The code $\C \leq \F_{q^m}^n$ is not intersecting if and only if, by definition, there exists two minimal codewords with disjoint support. The latter, by \cite[Lemma 5.10]{alfarano2024cyclic}, is true if and only if $\mcM^*$ has two disjoint circuits which by Theorem \ref{prop:conequiv}, is true if and only if $\mcM$ has a $t$-separation with $t \leq \rho(E)-1 = k-1$.
\end{proof}

We conclude the paper with a result that links vertical connectivity of $q$-matroids to vertical connectivity of classical matroids.

\begin{theorem}\label{q_sep}
    Let $\mathcal M=(E, \rho)$ be a $q$-matroid with $\kappa(\mcM)=t$. Let $(A, V)$ be a $t$-separation for~$\mcM$. Fix a basis  $\beta = \{a_1, \ldots, a_t, v_{t+1}, \ldots, v_n\}$ for $E$, with $\{a_i\}$ a basis of $A$ and $\{v_i\}$ a basis for $V$. Define a (classical) matroid  $M=(\beta, r)$ with ground set $\beta$ and rank function given by 
    \[r(X)=\rho(\langle X \rangle).\]
    Then $\mcM$ and $M$ have the same vertical connectivity, each in their respective sense. 
    
\end{theorem}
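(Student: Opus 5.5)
The plan is to show that separations pass in both directions between $M$ and $\mcM$ via the map $X \mapsto \langle X\rangle$, which sends subsets of $\beta$ to subspaces of $E$. We then compare the two minima defining $\kappa$. The indexing $a_1,\dots,a_t$ in the statement plays no role; we only use that $\beta$ is the union of a basis of $A$ and a basis of $V$.

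First we check that $M=(\beta,r)$ is a matroid, so that $\kappa(M)$ makes sense. The key observation is that, because $\beta$ is a basis of $E$, for any $X,Y\subseteq\beta$ we have
\[\langle X\cap Y\rangle=\langle X\rangle\cap\langle Y\rangle \quad\text{and}\quad \langle X\cup Y\rangle=\langle X\rangle+\langle Y\rangle.\]
The first identity holds by uniqueness of coordinates with respect to $\beta$. Given these identities, the axioms follow from $(q\mathrm{R}1)$–$(q\mathrm{R}3)$:
\begin{itemize}
\item (R1) holds since $\dim\langle X\rangle=|X|$;
\item (R2) follows from monotonicity of $\rho$;
\item (R3) is exactly $(q\mathrm{R}3)$ applied to $\langle X\rangle$ and $\langle Y\rangle$.
\end{itemize}
We also record that $r(\beta)=\rho(E)$. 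I expect this verification, in particular the intersection identity, to be the only point needing genuine care. The rest is bookkeeping.

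Next we show $\kappa(M)\le t$. Let $X_A=\{a_i\}$ and $X_V=\{v_i\}$, so $(X_A,X_V)$ is a partition of $\beta$. We have $\langle X_A\rangle=A$ and $\langle X_V\rangle=V$, hence $r(X_A)=\rho(A)$, $r(X_V)=\rho(V)$ and $r(\beta)=\rho(E)$. Therefore $\lambda_M(X_A)=\lambda(A,V)<t$ and $\min\{r(X_A),r(X_V)\}\ge t$. So $(X_A,X_V)$ is a vertical $t$-separation of $M$. Since $t<\rho(E)=r(\beta)$ by the argument at the start of the proof of Theorem~\ref{prop:qmatconcirc}, this separation is counted in the definition of $\kappa(M)$.

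Conversely, we show $\kappa(M)\ge t$. Suppose $(X,\beta\setminus X)$ is a vertical $j$-separation of $M$. Set $A'=\langle X\rangle$ and $V'=\langle \beta\setminus X\rangle$. By the identities above, $A'\cap V'=\langle\emptyset\rangle=\{0\}$ and $A'+V'=E$, so $V'$ is a complement of $A'$. The ranks agree: $\rho(A')=r(X)$ and $\rho(V')=r(\beta\setminus X)$. Hence $\lambda(A',V')=\lambda_M(X)<j$ and $\min\{\rho(A'),\rho(V')\}\ge j$, so $(A',V')$ is a vertical $j$-separation of $\mcM$ in the sense of Definition~\ref{def:qconn}. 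It follows that $j\ge\kappa(\mcM)=t$.

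Combining the two inequalities gives $\kappa(M)=t=\kappa(\mcM)$.
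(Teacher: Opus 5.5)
Your proposal is correct and follows essentially the same route as the paper. Both verify the matroid axioms for $(\beta,r)$ from $(q\mathrm{R}1)$--$(q\mathrm{R}3)$, transport $(A,V)$ to the partition $(\{a_i\},\{v_i\})$ to get $\kappa(M)\le t$, and send any $j$-separation $(X,\beta\setminus X)$ of $M$ to $(\langle X\rangle,\langle \beta\setminus X\rangle)$ for the reverse inequality; your explicit checks that $\langle X\rangle\cap\langle\beta\setminus X\rangle=\{0\}$ and that $t\le\rho(E)-1$, and your remark that only the split of $\beta$ into bases of $A$ and $V$ matters (not $\dim A=t$), fill in details the paper leaves implicit.
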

\begin{proof}
We begin by showing that the pair $(\beta, r)$ is indeed a matroid in the classical sense. Let $X, Y \subseteq \beta$, then:
\begin{itemize}
        \item[(R1)]  $0 \leq r(X) = \rho(\langle X\rangle) \leq \dim(\langle X \rangle)=|X|$,
        \item[(R2)] if $X \subseteq Y$ then $r(X) =  \rho(\langle X\rangle) \leq  \rho(\langle Y\rangle) = r(Y)$,
        \item[(R3)] $ 
        \begin{aligned}[t]
            r(X \cap Y) + r(X \cup Y) & \leq r(A) + r(B) = \rho(\langle X \cap Y\rangle) + \rho(\langle X \cup Y \rangle) \\
            & \leq \rho(\langle X\rangle  \cap \langle Y\rangle)+\rho(\langle X\rangle  + \langle Y\rangle)  \\ & \leq \rho(\langle X\rangle) +\rho(\langle Y\rangle)=r(X) + r(Y),
        \end{aligned}
        $
    \end{itemize}
    where we have  used axioms ($q$R1), ($q$R2), ($q$R3), and the fact that $X\subseteq \beta$ is a linearly independent set.
    Now observe that:
    \begin{itemize}
            \item $r(\{a_1, \ldots, a_t\} = \rho(\langle a_1, \ldots, a_t\rangle) \geq t$,
            \item $r(\{v_{t+1}, \ldots, v_n\} )= \rho(\langle v_{t+1}, \ldots, v_n\rangle) \geq t$,
            \item $ \begin{aligned}[t] \lambda_{M}(\{a_1, \ldots a_t\}, \{v_{t+1}, \ldots v_n\}) &= \rho(\langle a_1, \ldots, a_t\rangle) + \rho(\langle v_{t+1}, \ldots, v_n \rangle) - \rho(\langle \beta \rangle) \\ &= \lambda_{\mathcal M}(A, V)<t, \end{aligned} $
    \end{itemize}
since $(A, V)$ is a separation for $\mathcal M$. Therefore $(\{a_1, \ldots a_t\}, \{v_{t+1}, \ldots v_n\})$ is a vertical $t$-separation for the classical matroid $M$. This shows that $\kappa(M)\leq\kappa(\mcM)$.  On the other hand, let $(X, X^c)$ be an $s$-separation for $M$. Then $(\langle X \rangle, \langle X^c \rangle)$ is an $s$-separation for $\mcM$, since 
\begin{itemize}
    \item $\rho(\langle X \rangle)=r(X)\geq s$, and likewise for $X^c$,
    \item $\lambda_\mcM(\langle X \rangle, \langle X^c \rangle) =  \rho(\langle X \rangle)+ \rho(\langle X^c \rangle) - \rho(E) = r(X)+r(X^c)-r(\beta)<s.$
\end{itemize}
Therefore $\kappa(\mcM)\leq \kappa(M)$, concluding the proof.
\end{proof}

\medskip

\bibliographystyle{unsrt}
\bibliography{biblio}

@article{alfarano2019geometric,
title = {A geometric characterization of minimal codes and their asymptotic performance},
journal = {Advances in Mathematics of Communications},
volume = {16},
number = {1},
pages = {},
year = {2022},
issn = {1930-5346},
doi = {10.3934/amc.2020104},
url = {https://www.aimsciences.org/article/id/d52e2e3a-722a-406f-b487-bf13dfc59caf},
author = {Gianira N. Alfarano and Martino Borello and Alessandro Neri}
}

@article{HENG2018176,
title = {Minimal linear codes over finite fields},
journal = {Finite Fields and Their Applications},
volume = {54},
pages = {},
year = {2018},
issn = {1071-5797},
doi = {https://doi.org/10.1016/j.ffa.2018.08.010},
url = {https://www.sciencedirect.com/science/article/pii/S1071579718301047},
author = {Ziling Heng and Cunsheng Ding and Zhengchun Zhou}
}

@article{KatonaSrivastava1983,
  title={Minimal 2-coverings of a finite affine space based on {GF}(2)},
  author={Katona, G. and Srivastava, J.},
  journal={Journal of Statistical Planning and Inference},
  volume={8},
  number={3},
  pages={},
  year={1983},
  publisher={Elsevier},
}

@article{blackburn,
title = {Frameproof Codes},
author = {S.R. Blackburn},
year = {2003},
doi = {10.1137/S0895480101384633},
volume = {16},
pages = {},
journal = {SIAM Journal on Discrete Mathematics},
issn = {1095-7146},
publisher = {Society for Industrial and Applied Mathematics Publications},
}

@article{alfarano2020combinatorialperspectivesminimalcodes,
author = {Alfarano, Gianira N. and Borello, Martino and Neri, Alessandro and Ravagnani, Alberto},
title = {Three Combinatorial Perspectives on Minimal Codes},
journal = {SIAM Journal on Discrete Mathematics},
volume = {36},
number = {1},
pages = {},
year = {2022},
doi = {10.1137/21M1391493},
URL = {https://doi.org/10.1137/21M1391493},
}

@article{COHNEN200375,
title = {Intersecting codes and separating codes},
journal = {Discrete Applied Mathematics},
volume = {128},
number = {1},
pages = {},
year = {2003},
issn = {0166-218X},
doi = {https://doi.org/10.1016/S0166-218X(02)00437-7},
url = {https://www.sciencedirect.com/science/article/pii/S0166218X02004377},
author = {G Cohen and S Encheva and S Litsyn and H.G Schaathun}
}

@article{MDSweights,
author = {Ezerman, Martianus and Grassl, Markus and Solé, Patrick},
year = {2009},
month = {},
pages = {},
title = {The weights in {MDS} codes},
volume = {57},
issue={1},
journal = {IEEE Transactions on Information Theory},
doi = {10.1109/TIT.2010.2090246}
}

@article{borello2024geometryintersectingcodesapplications,
title = {The geometry of intersecting codes and applications to additive combinatorics and factorization theory},
journal = {Journal of Combinatorial Theory, Series~A},
volume = {214},
pages = {},
year = {2025},
issn = {0097-3165},
doi = {https://doi.org/10.1016/j.jcta.2025.106023},
url = {https://www.sciencedirect.com/science/article/pii/S0097316525000184},
author = {Martino Borello and Wolfgang Schmid and Martin Scotti}
}

@article{ashbarg,
  author={Ashikhmin, A. and Barg, A.},
  journal={IEEE Transactions on Information Theory}, 
  title={Minimal vectors in linear codes}, 
  year={1998},
  volume={44},
  number={5},
  pages={},
  doi={10.1109/18.705584}}

@article{RavagnaniByrne2018,
title = {Partition-balanced families of codes and asymptotic enumeration in coding theory},
journal = {Journal of Combinatorial Theory, Series~A},
volume = {171},
pages = {},
year = {2020},
issn = {0097-3165},
doi = {https://doi.org/10.1016/j.jcta.2019.105169},
url = {https://www.sciencedirect.com/science/article/pii/S0097316519301505},
author = {Eimear Byrne and Alberto Ravagnani},

}

@book{Oxley,
    author = {Oxley, James},
    title = {Matroid Theory},
    publisher = {Oxford University Press},
    year = {2011},
    month = {},
    isbn = {9780198566946},
    doi = {10.1093/acprof:oso/9780198566946.001.0001},
    url = {https://doi.org/10.1093/acprof:oso/9780198566946.001.0001},
}

@article{hiltonmilner,
    author = {Hilton, A. J. W. and Milner, E. C.},
    title = {Some intersection theorems for systems of finite sets},
    journal = {The Quarterly Journal of Mathematics},
    volume = {18},
    number = {1},
    pages = {},
    year = {1967},
    month = {},
    issn = {0033-5606},
    doi = {10.1093/qmath/18.1.369},
    url = {https://doi.org/10.1093/qmath/18.1.369},
    eprint = {https://academic.oup.com/qjmath/article-pdf/18/1/369/4498834/18-1-369.pdf},
}

@article{erdoskorado,
    author = {Erd{\H{o}}s, P. and Ko, Chao and Rado, R.},
    title = {Intersection Theorems for Systems of Finite Sets},
    journal = {The Quarterly Journal of Mathematics},
    volume = {12},
    number = {1},
    pages = {},
    year = {1961},
    month = {},
    issn = {0033-5606},
    doi = {10.1093/qmath/12.1.313},
    url = {https://doi.org/10.1093/qmath/12.1.313},
    eprint = {https://academic.oup.com/qjmath/article-pdf/12/1/313/7288156/12-1-313.pdf},
}

@article{bartoli2025linearrankmetricintersectingcodes,
      title={Linear rank-metric intersecting codes}, 
      author={Daniele Bartoli and Martino Borello and Giuseppe Marino and Martin Scotti},
      year={2025},
      eprint={2507.00569},
      archivePrefix={arXiv},
      primaryClass={math.CO},
      journal={Preprint},
      note = {\href{https://arxiv.org/abs/2507.00569}{arXiv:2507.00569}}
}

@Incollection{Gorla2018,
author="Gorla, Elisa
and Ravagnani, Alberto",
title="Codes Endowed with the Rank Metric",
bookTitle="Network Coding and Subspace Designs",
year="2018",
publisher="Springer",
isbn="978-3-319-70293-3",
doi="10.1007/978-3-319-70293-3_1",
url="https://doi.org/10.1007/978-3-319-70293-3_1"
}

@article{Jurriusqmatroid,
author = {Jurrius, Relinde and Pellikaan, R.},
year = {2018},
month = {},
journal = {The Electronic Journal of Combinatorics},
volume = {25},
issue={3},
pages = {},
title = {Defining the $q$-analogue of a matroid},
doi = {10.48550/arXiv.1610.09250}
}

@article{degen2024most,
  title={Most $q$-matroids are not representable},
  author={Degen, Sebastian and K{\"u}hne, Lukas},
  journal={Preprint},
  note = {\href{https://arxiv.org/abs/2408.06795}{arXiv:2408.06795}},
  year={2024}
}

@article{jany2023projectivization,
  title={The Projectivization Matroid of a-Matroid},
  author={Jany, Benjamin},
  journal={SIAM Journal on Applied Algebra and Geometry},
  volume={7},
  number={2},
  pages={},
  year={2023},
  publisher={SIAM}
}

@article{gluesing2025cloud,
  title={The Cloud and Flock Polynomials of $q$-Matroids},
  author={Gluesing-Luerssen, Heide and Jany, Benjamin},
  journal={Preprint},
  note = {\href{https://arxiv.org/abs/2501.14984}{arXiv:2501.14984}},
  year={2025}
}

@article{alfarano2023critical,
  title={The critical theorem for $q$-polymatroids},
  author={Alfarano, Gianira N. and Byrne, Eimear},
  journal={Preprint},
  note = {\href{https://arxiv.org/abs/2305.07567}{arXiv:2305.07567}},
  year={2023}
}

@article{ghorpade2022shellability,
  title={Shellability and homology of $q$-complexes and $q$-matroids},
  author={Ghorpade, Sudhir R. and Pratihar, Rakhi and Randrianarisoa, Tovohery H.},
  journal={Journal of Algebraic Combinatorics},
  volume={56},
  number={4},
  pages={},
  year={2022},
  publisher={Springer}
}

@article{byrne2023cyclic,
  title={The cyclic flats of $\mathcal{L}$-polymatroids},
  author={Byrne, Eimear and Fulcher, Andrew},
  journal={Preprint},
  note = {\href{https://arxiv.org/abs/2312.05522}{arXiv:2312.05522}},
  year={2023}
}

@article{alfarano2024cyclic,
  title={The cyclic flats of a $q$-matroid},
  author={Alfarano, Gianira N. and Byrne, Eimear},
  journal={Journal of Algebraic Combinatorics},
  volume={60},
  number={1},
  pages={},
  year={2024},
  publisher={Springer}
}

@article{byrne2022constructions,
  title={Constructions of new $q$-cryptomorphisms},
  author={Byrne, Eimear and Ceria, Michela and Jurrius, Relinde},
  journal={Journal of Combinatorial Theory, Series~B},
  volume={153},
  pages={},
  year={2022},
  publisher={Elsevier}
}

@article{cunningham1981matroid,
  title={On matroid connectivity},
  author={Cunningham, William H},
  journal={Journal of Combinatorial Theory, Series~B},
  volume={30},
  number={1},
  pages={},
  year={1981},
  publisher={Elsevier}
}

@article{Oxley1981,
    author = {Oxley, James G.},
    title = {ON MATROID CONNECTIVITY},
    journal = {The Quarterly Journal of Mathematics},
    volume = {32},
    number = {2},
    pages = {},
    year = {1981},
    month = {},
    issn = {0033-5606},
    doi = {10.1093/qmath/32.2.193},
    url = {https://doi.org/10.1093/qmath/32.2.193},
    eprint = {https://academic.oup.com/qjmath/article-pdf/32/2/193/4434604/32-2-193.pdf},
}

@article{Inukaivert,
author = {Inukai, Thomas and Weinberg, Louis},
title = {Whitney Connectivity of Matroids},
journal = {SIAM Journal on Algebraic Discrete Methods},
volume = {2},
number = {2},
pages = {},
year = {1981},
doi = {10.1137/0602014},
URL = { 
        https://doi.org/10.1137/0602014
},
eprint = { 
        https://doi.org/10.1137/0602014
}
}

@article {bartoli2019inductive,
    AUTHOR = {Bartoli, Daniele and Bonini, Matteo and G\"{u}ne\c{s}, Bur\c{c}in},
     TITLE = {An inductive construction of minimal codes},
   JOURNAL = {Cryptography and Communications},
  FJOURNAL = {Cryptography and Communications. Discrete Structures, Boolean
              Functions and Sequences},
    VOLUME = {13},
      YEAR = {2021},
    NUMBER = {3},
     PAGES = {},
      ISSN = {1936-2447},
   MRCLASS = {94B05 (94A62)},
  MRNUMBER = {4258045},
       DOI = {10.1007/s12095-021-00474-2},
       URL = {https://doi.org/10.1007/s12095-021-00474-2},
}

@article{bartoli2020cutting,
  url = {https://doi.org/10.1515/forum-2020-0338},
title = {On cutting blocking sets and their codes},
author = {Daniele Bartoli and Antonio Cossidente and Giuseppe Marino and Francesco Pavese},
pages = {},
volume = {34},
number = {2},
journal = {Forum Mathematicum},
doi = {doi:10.1515/forum-2020-0338},
year = {2022},
lastchecked = {2026-02-13}
}

@article {bonini2020minimal,
    AUTHOR = {Bonini, Matteo and Borello, Martino},
     TITLE = {Minimal linear codes arising from blocking sets},
   JOURNAL = {Journal of Algebraic Combinatorics},
  FJOURNAL = {Journal of Algebraic Combinatorics. An International Journal},
    VOLUME = {53},
      YEAR = {2021},
    NUMBER = {2},
     PAGES = {},
      ISSN = {0925-9899},
   MRCLASS = {94B05 (51E21 94A62 94C11)},
  MRNUMBER = {4238182},
       DOI = {10.1007/s10801-019-00930-6},
       URL = {https://doi.org/10.1007/s10801-019-00930-6},
}

@incollection {MR3163591,
    AUTHOR = {Cohen, G\'{e}rard D. and Mesnager, Sihem and Patey, Alain},
     TITLE = {On minimal and quasi-minimal linear codes},
 BOOKTITLE = {Cryptography and coding},
    SERIES = {Lecture Notes in Computer Science},
    VOLUME = {8308},
     PAGES = {},
 PUBLISHER = {Springer, Heidelberg},
      YEAR = {2013},
   MRCLASS = {94B05},
  MRNUMBER = {3163591},
MRREVIEWER = {Simon N. Litsyn},
       DOI = {10.1007/978-3-642-45239-0_6},
       URL = {https://doi.org/10.1007/978-3-642-45239-0_6},
}

@article{kiermaier2019minimum,
  title={On the minimum number of minimal codewords},
  author={dela Cruz, Romar and Kiermaier, Michael and Kurz, Sascha and Wassermann, Alfred},
  journal={Advances in Mathematics of Communications},
  year={2023},
  volume={17},
  issue={2}
}

@article {lu2019parameters,
    AUTHOR = {Lu, Wei and Wu, Xia and Cao, Xiwang},
     TITLE = {The parameters of minimal linear codes},
   JOURNAL = {Finite Fields and Their Applications},
  FJOURNAL = {Finite Fields and their Applications},
    VOLUME = {71},
      YEAR = {2021},
     PAGES = {},
      ISSN = {1071-5797},
   MRCLASS = {94B05 (94A62)},
  MRNUMBER = {4202557},
       DOI = {10.1016/j.ffa.2020.101799},
       URL = {https://doi.org/10.1016/j.ffa.2020.101799},
}

@article {Massey,
    AUTHOR = {Massey, J. L. },
     TITLE = {Minimal codewords and secret sharing},
   JOURNAL = {In Proceedings of the 6th joint Swedish-Russian international workshop on information theory},
    VOLUME = {},
      YEAR = {1993},
    NUMBER = {},
     PAGES = {},
}

@article {tang2019full,
    AUTHOR = {Tang, C. and Qiu, Y. and Liao, Q. and Zhou, Z.},
     TITLE = {Full characterization of minimal linear codes as cutting blocking sets},
   JOURNAL = {IEEE Transactions on Information Theory},
  FJOURNAL = {Institute of Electrical and Electronics Engineers.
              Transactions on Information Theory},
    VOLUME = {67},
      YEAR = {2021},
    NUMBER = {6},
     PAGES = {},
       DOI = {10.1109/TIT.2021.3070377},
}

@article{barg,
  title={The matroid of supports of a linear code},
  author={Barg, A.},
  journal={Applicable Algebra in Engineering, Communication and Computing},
  volume={8},
  number={2},
  pages={},
  year={1997}
}

@article{britz2007higher,
  title={Higher support matroids},
  author={Britz, T.},
  journal={Discrete Mathematics},
  volume={307},
  number={17-18},
  pages={},
  year={2007}
}

@incollection{jurrius2013codes,
  title={Codes, arrangements and matroids},
  author={Jurrius, R. and Pellikaan, R.},
  booktitle={Algebraic {G}eometry {M}odeling in {I}nformation {T}heory},
  pages={},
  year={2013},
  publisher={World Scientific}
}

@article{britz2010code,
  title={Code enumerators and {T}utte polynomials},
  author={Britz, T.},
  journal={IEEE Transactions on Information Theory},
  volume={56},
  number={9},
  pages={},
  year={2010}
}

@article{J2013,
  title={Hamming weights and {B}etti numbers of {S}tanley-{R}eisner rings associated to matroids},
  author={Johnsen, T. and Verdure, H.},
  journal={Applicable Algebra in Engineering, Communication and Computing},
  volume={24},
  number={1},
  pages={},
  year={2013}
}

\end{document}